\documentclass[12pt,reqno]{amsart}

\usepackage{amssymb,latexsym}

\usepackage{enumerate}

\usepackage[french,english]{babel}
\usepackage{amsmath}
\usepackage{graphicx}
\usepackage{amssymb}
\usepackage{bbm}
\usepackage{amsthm,mathtools}
\usepackage{ulem}
\usepackage{geometry}
\usepackage{tikz-cd}
\usepackage{mathrsfs}
\usepackage[colorinlistoftodos]{todonotes}
\usepackage{enumitem}
\usepackage{verbatim}
\usepackage[foot]{amsaddr}
\usepackage{dsfont}

\makeatletter

\@namedef{subjclassname@2010}{
	
	\textup{2020} Mathematics Subject Classification}

\makeatother
\newtheorem{thm}{Theorem}[section]
\newtheorem*{thm*}{Theorem}

\newtheorem{lem}[thm]{Lemma}

\theoremstyle{definition}

\numberwithin{equation}{section}

\newcommand{\M}{{\mathcal M}}

\newcommand{\F}{\mathcal F}

\usepackage{hyperref}
\hypersetup{hypertex=true,colorlinks=true,linkcolor=blue,anchorcolor=blue,citecolor=blue}
\newcommand{\newabstract}[1]{%
	\par\bigskip
	\csname otherlanguage*\endcsname{#1}%
	\csname captions#1\endcsname
	\item[\hskip\labelsep\scshape\abstractname.]
}

\begin{document}

	\baselineskip=17pt

	\title[Large Values of  quadratic character sums]{Large values of quadratic character sums}
	
	\author{Zikang Dong}
\author{Ruihua Wang}
\author{Weijia Wang}
\author{Hao Zhang}
\author{Shengbo Zhao}
\address[Zikang Dong]{School of Mathematical Sciences, Soochow University, Suzhou 215006, P. R. China}
\address[Ruihua Wang]{School of Fundamental Sciences, Hainan Bielefeld University of Applied Sciences, Danzhou 578101, P. R. China}
\address[Weijia Wang]{Morningside Center of Mathematics, Academy of Mathematics and Systems Science, Chinese Academy of Sciences, Beijing 100190, P. R. China}
\address[Hao Zhang]{School of Mathematics, Hunan University, Changsha 410082, P. R. China}
	\address[Shengbo Zhao]{4. School of Mathematical Sciences, Key Laboratory of Intelligent Computing and Applications (Tongji University), Ministry of Education, Tongji University, Shanghai 200092, China}
	
   \email{zikangdong@gmail.com}
\email{ruih.wan9@gmail.com}
\email{weijiawang@amss.ac.cn}
\email{zhanghaomath@hnu.edu.cn}
	\email{shengbozhao@hotmail.com}

	\date{\today}
	
	\begin{abstract} 
		In this paper, we investigate  large values of quadratic Dirichlet character sums.  We prove new Omega results for both short and long quadratic character sums under the assumption of the Generalized Riemann Hypothesis (GRH), which improve the previous results.
	\end{abstract}

	\subjclass[2020]{Primary 11L40, 11N25.}
	
	\maketitle
	\tableofcontents
    \section{Introduction}
The study of analytic properties of character sums is a classical topic in
analytic number theory. Let $q$ be a prime, and $\chi({\rm mod}\;q)$ be any primitive character. In 1918, P\'olya and Vinogradov showed the famous upper bound
$$\sum_{n\le x}\chi(n)\ll \sqrt q\log q.$$
This was improved to $\sqrt q\log_2q$ under GRH by Montgomery and Vaughan in 1977. Here and throughout, we use $\log_j$ to denote the $j$-th iterated logarithm. Before that, 
 Paley \cite{Paley} showed that there exists quadratic characters $\chi({\rm mod}\;q)$ and $x\le q$ such that $$|\sum_{n\le x}\chi(n)|\gg\sqrt q\log_2q.$$
 Thus the large values of character sums are also very important, especially of quadratic characters.
 
 Granville and Soundararajan \cite{GS01} made very important works on this subject, and some of their results were subsequently improved by \cite{BT,Hou,Mun}. These improvements are based on the development of the resonance method, which was firstly used to produce large values of zeta and $L$-functions by Hilberdink \cite{Hil} and Soundararajan \cite{Sound}.

Now we focus on the quadratic character sums. Let $\mathcal F$ denote the set of fundamental discriminants, and for
$d\in\mathcal F$ let $\chi_d$ be the associated primitive quadratic
Dirichlet character. For $y\ge1$, write
\[
S_d(y):=\sum_{n\le y}\chi_d(n).
\]

We refer to \cite{DWZ,Lam} for the distribution of $S_d(y)$.
In the present paper we are concerned with the quadratic family
\[
\{\chi_d:X<|d|\le2X,\ d\in\mathcal F\},
\]
and with character sums whose length is large compared with the ranges
usually accessible by direct orthogonality arguments. The quadratic
family is especially interesting in this respect. Unlike the full
family of Dirichlet characters modulo a fixed modulus, quadratic
characters do not enjoy an exact orthogonality relation. Consequently,
the analysis of a resonating moment requires a sufficiently precise
average of
\[
\chi_d(n)
\]
over fundamental discriminants. Under the Generalized Riemann
Hypothesis, Darbar and Maiti~\cite{DM} obtained such an
estimate with an error term depending mildly on the square-free part
of $n$. This makes it possible to use considerably longer resonators
in the quadratic family.

Recent work has produced a number of lower bounds for long quadratic
character sums. In the intermediate range
\[
\exp\left\{
4\sqrt{\log X\log_2X}\,\log_3X
\right\}
<
x
<
\exp\left\{(\log X)^{1/2+\varepsilon}\right\},
\]
Dong and Zhang~\cite{DZ} proved, under GRH, that
\[
\max_{\substack{X<|d|\le2X\\d\in\mathcal F}}
\frac{1}{\sqrt{x}}
S_d(x)
\ge
\exp\left\{
(1+o(1))
\sqrt{\frac{\log X}{\log_2X}}
\right\}.
\]
For the corresponding long dual sums, related lower bounds were
obtained in~\cite{DSWZ}. In a longer range,
\[
\exp\left\{(\log X)^{1/2+\varepsilon}\right\}
<
x
<
X^{1/2},
\]
Dong and Zhang~\cite{DZ} obtained
\[
\max_{\substack{X<|d|\le2X\\d\in\mathcal F}}
\frac{1}{\sqrt{x}}
S_d(x)
\ge
\exp\left\{
(1+o(1))
\sqrt{
\frac{
\log(\sqrt X/x)\log_3(\sqrt X/x)
}{
\log_2(\sqrt X/x)
}
}
\right\},
\]
while lower bounds for the corresponding dual character sums were
established in~\cite{DWWZ}.

The purpose of this paper is to improve these estimates by modifying
the resonating moment. A key feature of our argument is that, instead
of considering a moment in which the character sum is squared, we use
the first weighted moment
\[
\sum_{\substack{X<|d|\le2X\\d\in\mathcal F}}
S_d(x)R(d)^2,
\]
where $R(d)$ is a suitable non-negative resonator. After averaging over
$d$, the square condition in the main term is
\[
kmn=\square.
\]
When the resonator is supported on square-free integers, this condition
has the particularly simple form
\[
k=\frac{[m,n]}{(m,n)}\ell^2.
\]
Consequently, the main term contains the kernel
\[
\sqrt{x}\sqrt{\frac{(m,n)}{[m,n]}}.
\]
Thus the critical G\'al sum appears directly in the first moment, and
no square root has to be taken at the end of the resonance argument.
This observation is responsible for the improvement in the constants
obtained below.

Our first result concerns the intermediate range.

\begin{thm}\label{thm1}
Assume GRH. Let $X$ be sufficiently large and let
\[
\exp\left\{
4\sqrt{\log X\log_2X}\,\log_3X
\right\}
<
x
<
\exp\left\{(\log X)^{1/2+\varepsilon}\right\}.
\]
Then
\[
\max_{\substack{X<|d|\le2X\\d\in\mathcal F}}
\frac{1}{\sqrt{x}}
\sum_{n\le x}\chi_d(n)
\ge
\exp\left\{
(\sqrt2+o(1))
\sqrt{\frac{\log X}{\log_2X}}
\right\}.
\]
Moreover,
\[
\max_{\substack{X<|d|\le2X\\d\in\mathcal F}}
\frac{1}{\sqrt{|d|/x}}
\sum_{n\le |d|/x}\chi_d(n)
\ge
\exp\left\{
(\sqrt2+o(1))
\sqrt{\frac{\log X}{\log_2X}}
\right\}.
\]
\end{thm}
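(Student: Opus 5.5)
We compare the first weighted moment with the unweighted one. For a non-negative resonator $R(d)=\sum_{m\in\mathcal M} r(m)\chi_d(m)$, supported on square-free $m$ with $r\ge0$, set
\[
M_1=\sum_{\substack{X<|d|\le2X\\d\in\mathcal F}} S_d(x)R(d)^2, \qquad M_0=\sum_{\substack{X<|d|\le2X\\d\in\mathcal F}} R(d)^2 .
\]
We take $\Phi$ a smooth weight supported in $[X,2X]$ to ease the averaging. Then
\[
\max_d \frac{S_d(x)}{\sqrt x}\ \ge\ \frac{M_1}{\sqrt x\,M_0}.
\]
Expanding gives
\[
M_1=\sum_{k\le x}\sum_{m,n}r(m)r(n)\sum_d\chi_d(kmn).
\]

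\textbf{Averaging over $d$.} We invoke the GRH estimate of Darbar--Maiti~\cite{DM}. It says that $\sum_{d\in\mathcal F}\Phi(|d|/X)\chi_d(N)$ equals a main term $\asymp X\prod_{p\mid N}(1+1/p)^{-1}$ when $N=\square$, plus an error $\ll X^{1/2+\varepsilon}$ times a mild power of the square-free part of $N$.

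\emph{Main terms.} For square-free $m,n$, the condition $kmn=\square$ forces $k=\frac{[m,n]}{(m,n)}\ell^2$. Summing over $\ell\le\sqrt{x(m,n)/[m,n]}$ gives the main term of $M_1$:
\[
\asymp X\sqrt x\sum_{m,n}r(m)r(n)\sqrt{\frac{(m,n)}{[m,n]}},
\]
up to harmless Euler factors. The same computation with $k=1$ gives the main term of $M_0$:
\[
\asymp X\sum_m r(m)^2 .
\]
We must also ensure that $[m,n]/(m,n)\le x$ for the relevant pairs, or at least that discarding the pairs with $[m,n]/(m,n)> x$ is harmless. This holds because the resonator support consists of integers with prime factors in a range whose products are controlled.

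\emph{Error terms.} The errors are bounded by
\[
X^{1/2+\varepsilon}\, x\, \Bigl(\sum_m r(m)\Bigr)^2\cdot(\text{polylog}).
\]
Since $\bigl(\sum_m r(m)\bigr)^2\le |\mathcal M|\sum_m r(m)^2$, it suffices to have $|\mathcal M|\, x \le X^{1/2-\varepsilon}$ up to lower-order factors. We therefore choose the resonator length $N=|\mathcal M|$-scale about $X^{1/2-\varepsilon}/x$. In the range $x<\exp\{(\log X)^{1/2+\varepsilon}\}$ this is still $X^{1/2+o(1)}$, that is, $\log N\sim\frac12\log X$.

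\textbf{Choice of resonator.} We use the standard GCD-sum (Gál-type) optimizer of Aistleitner--Berkes--Seip / Bondarenko--Seip, adapted as in \cite{DZ}: $r$ multiplicative, supported on square-free $m$ with prime factors in $[L\log L,\ \exp((\log_2 L)^2)L\log L]$, with $r(p)=\frac{L}{\sqrt p\,\log p}$ and $L=\sqrt{\log N\log_2N}$, truncated to size at most $N$. This construction yields
\[
\frac{\sum_{m,n}r(m)r(n)\sqrt{(m,n)/[m,n]}}{\sum_m r(m)^2}\ \ge\ \exp\Bigl\{(1+o(1))\sqrt{\frac{\log N}{\log_2N}}\Bigr\}.
\]
We check this by comparing with $\prod_p\bigl(1+2r(p)/\sqrt p\bigr)$ over $\prod_p(1+r(p)^2)$ after Rankin truncation.

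With $\log N\sim\frac12\log X$ this would give only the constant $1/\sqrt2$, so the point needing care is the effective length. Because the first moment avoids taking a square root, we have freedom in the $d$-average. I expect the paper lets $\mathcal M$ have size $\approx X^{1-\varepsilon}$ in the sense that the effective $\log N$ is $\sim 2\log X$. The off-diagonal error from~\cite{DM} depends only mildly on the square-free part, so the trivial bound need not be used. The relevant counting is instead over $\sum_{m,n}r(m)r(n)$ weighted by $(mn)^{\varepsilon}$. This can be bounded by $\prod_p(1+r(p))^2$ rather than $|\mathcal M|\sum r^2$, and $\prod_p(1+r(p))^2$ is $\exp(O(L\cdot\text{small}))$ relative to $\sum r^2$, independent of the truncation size. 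Making this error analysis work with $\log N\sim 2\log X$, which produces the constant $\sqrt2$, is the main obstacle. I would first try bounding $\sum_{m,n}r(m)r(n)(mn)^{\varepsilon}$ via Euler products, using the lower limit $L\log L$ of the prime range to make $r(p)$ small.

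\textbf{The dual sum.} For $\sum_{n\le|d|/x}\chi_d(n)$ we apply the Pólya Fourier expansion and Poisson summation. These relate the dual sum to $\frac{\sqrt{|d|}}{\pi}\sum_k \frac{\chi_d(k)}{k}\sin(2\pi k/x)$ (or the cosine analogue for $d<0$), up to the Gauss sum sign $\epsilon_d=1$ or $i$. Because the $x$ in the weight is of the size admissible above, the same resonator argument applies with the kernel $\sum_k \frac{1}{k}\sin(2\pi k\ell^2 a/x)$ over $k=a\ell^2$, $a=[m,n]/(m,n)$. This yields the comparable main term $\sqrt{|d|/x}$ times the same Gál sum. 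The positivity of the resulting weights follows as in \cite{DWWZ}.
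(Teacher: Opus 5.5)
\textbf{There is a genuine gap: as written, the proposal does not reach the constant $\sqrt2$, and the repair you suggest does not work.} Your framework is the paper's: the linear moment $\sum_d S_d(x)R(d)^2$ against $\sum_d R(d)^2$, the Darbar--Maiti average, and $kmn=\square\Longleftrightarrow k=\frac{[m,n]}{(m,n)}\ell^2$. But by your own accounting you only get $1/\sqrt2$, and you then try to lengthen the resonator to $\log N\sim 2\log X$. That route fails for two reasons.
\begin{itemize}
\item In this method the off-diagonal error $X^{1/2+\eta+o(1)}$ is paid on each of the $x\bigl(\sum_m r(m)\bigr)^2$ weighted terms, with no cancellation. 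After Cauchy--Schwarz this caps the length at $Y=X^{1/2-\delta}/x$.
\item Your Euler-product substitute is far worse than $|\mathcal M|\sum_m r(m)^2$, not better. For a resonator that actually produces a gain, e.g.\ $r(p)=\lambda/(\sqrt p\log p)$ on $\lambda^2\le p\le B:=\exp((\log\lambda)^2)$, one has $\sum_p r(p)\gg\lambda\sqrt B/(\log B)^2$. This exceeds every power of $\log X$, so $\prod_p(1+r(p))^2$ is larger than any power of $X$, while $\sum_m r(m)^2=X^{o(1)}$.
\end{itemize}
The missing point is that no lengthening is needed, because your G\'al constant is off by a factor $2$. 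The constant $1$ belongs to Soundararajan's one-sided quotient for $\zeta$. For the resonator just described, supported on square-free integers, with $\lambda=\sqrt{(1-\rho)\log Y\log_2Y}$, the paper computes
\[
\mathcal G(r)=\prod_{p\in\mathcal P}\Bigl(1+\frac{2r(p)}{\sqrt p\,(1+r(p)^2)}\Bigr),\qquad
\log\mathcal G(r)\sim 2\lambda\sum_{p\in\mathcal P}\frac1{p\log p}\sim\frac{\lambda}{\log\lambda}\sim 2\sqrt{1-\rho}\,\sqrt{\frac{\log Y}{\log_2Y}}.
\]
Both cross terms contribute ($p$ dividing only $m$, or only $n$). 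Since $\log Y\sim\frac12\log X$, this already gives $\sqrt2$, up to $\rho$ and $\delta$.

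\textbf{Other steps that need repair.}
\begin{itemize}
\item \emph{The resonator as written gives essentially no gain.} On $[L\log L,\,e^{(\log_2L)^2}L\log L]$ with $L=\sqrt{\log N\log_2N}$ one has $r(p)=L/(\sqrt p\log p)\to\infty$. So each local factor is only $1+O(\log p/L)$, and the total gain is $\exp\{O(e^{(\log_2L)^2}\log L)\}$. The range must be $\lambda^2\le p\le\exp((\log\lambda)^2)$. There $r(p)=o(1)$ and $\sum_p r(p)^2\log p=(1-\rho)\log Y$, so Rankin's trick with $\alpha=(\log\lambda)^{-3}$ shows that truncating to $m,n\le Y$ costs only a factor $1+o(1)$, both in $\sum r(m)^2$ and in the G\'al form.
\item \emph{Localization needs an argument.} Discarding pairs with $q(m,n)=[m,n]/(m,n)>x/(\log X)^2$ does not follow from the shape of the support, since $q$ can be as large as $Y^2$. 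The paper uses Rankin's trick again: the G\'al-weighted mean of $\log q$ is about $2\lambda\sum_{p\in\mathcal P}1/p\approx\sqrt2\sqrt{\log X\log_2X}\,\log_3X$. This is exactly where the hypothesis $\log x>4\sqrt{\log X\log_2X}\,\log_3X$ enters, and your proposal never uses it.
\item \emph{Positivity in the dual sum.} The sine kernel of even characters has no fixed sign, so appealing to \cite{DWWZ} is not an argument. The paper restricts to $d<0$, where pairing $\pm k$ in Lemma~\ref{lem2.1} gives the weight $\frac2k\bigl(1-\cos\frac{2\pi k}{x}\bigr)\ge0$. It takes $Y=X^{1/2-\delta}$, since the factor $1/k$ costs only $\log z$ in the error. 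It then keeps only $\frac12\sqrt{x/q}\le\ell\le\frac1{\sqrt2}\sqrt{x/q}$, where $1-\cos\ge1$. This yields a main term of size $\frac{X}{\sqrt x}$ times the localized G\'al form, against $M_1\ll X\sum r(m)^2$.
\end{itemize}
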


The proof of Theorem~\ref{thm1} uses a Hough-type multiplicative
resonator. The relevant Euler product produces the gain
\[
\exp\left\{
(2+o(1))
\sqrt{\frac{\log Y}{\log_2Y}}
\right\}
\]
for a resonator of effective length $Y=X^{1/2-o(1)}$. Since
\[
\log Y\sim\frac12\log X,
\]
the constant $2$ at the resonator level becomes $\sqrt2$ in the final
estimate. The lower restriction on $x$ allows us to localize the
G\'al mass to pairs satisfying
\[
\frac{[m,n]}{(m,n)}\ll x
\]
by Rankin's trick.

The second assertion of Theorem~\ref{thm1} is obtained from the same
resonator after applying P\'olya's Fourier expansion. Restricting to
negative fundamental discriminants, for which $\chi_d$ is odd, the
Fourier expansion naturally introduces the non-negative weight
\[
1-\cos\left(\frac{2\pi k}{x}\right).
\]
The additional factor $1/k$ in the Fourier sum permits a resonator of
length $X^{1/2-o(1)}$, and hence the same constant $\sqrt2$ is retained
in the dual range.

For still longer sums, the appropriate extremal object is the
square-free G\'al sum studied by de la Bret\`eche and
Tenenbaum~\cite{BT}. Their result gives
\[
\max_{\substack{|\mathcal M|=N\\
m\ {\rm squarefree}\ (m\in\mathcal M)}}
\frac{1}{N}
\sum_{m,n\in\mathcal M}
\sqrt{\frac{(m,n)}{[m,n]}}
=
\exp\left\{
(2+o(1))
\sqrt{
\frac{\log N\log_3N}{\log_2N}
}
\right\}.
\]
Combining this construction with the same first-moment idea gives our
second result.

\begin{thm}\label{thm2}
Assume GRH. Let $X$ be sufficiently large and suppose that
\[
\exp\left\{(\log X)^{1/2+\varepsilon}\right\}
<
x
<
X^{1/2},
\]
with
\[
\frac{\sqrt X}{x}\longrightarrow\infty.
\]
Then
\[
\max_{\substack{X<|d|\le2X\\d\in\mathcal F}}
\frac{1}{\sqrt{x}}
\sum_{n\le x}\chi_d(n)
\ge
\exp\left\{
(2+o(1))
\sqrt{
\frac{
\log(\sqrt X/x)\log_3(\sqrt X/x)
}{
\log_2(\sqrt X/x)
}
}
\right\}.
\]
Moreover,
\[
\max_{\substack{X<|d|\le2X\\d\in\mathcal F}}
\frac{1}{\sqrt{|d|/x}}
\sum_{n\le |d|/x}\chi_d(n)
\ge
\exp\left\{
(2+o(1))
\sqrt{
\frac{
\log(\sqrt X/x)\log_3(\sqrt X/x)
}{
\log_2(\sqrt X/x)
}
}
\right\}.
\]
\end{thm}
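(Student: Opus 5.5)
We follow the first-moment resonance strategy described in the introduction, now with the de la Bret\`eche--Tenenbaum extremal set. Let $N$ be a parameter with $\log N\sim\log(\sqrt X/x)$, slightly reduced so that $Nx\le X^{1/2-o(1)}$. Let $\mathcal M$ be a set of $N$ square-free integers attaining the G\'al maximum quoted above from \cite{BT}. Their construction takes $\mathcal M$ to be the square-free integers built from primes in a restricted range with a controlled number of prime factors. We set
\[
R(d)=\sum_{m\in\mathcal M}\chi_d(m),\qquad
S_1=\sum_{d}S_d(x)R(d)^2,\qquad S_2=\sum_d R(d)^2,
\]
with $d$ running over $\mathcal F\cap(X,2X]$, restricted to negative $d$ for the dual statement. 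Since $R(d)^2\ge0$, we have $\max_d S_d(x)\ge S_1/S_2$, so it suffices to show
\[
S_1/S_2\ge\sqrt x\,\exp\Big\{(2+o(1))\sqrt{\log N\log_3N/\log_2N}\Big\}.
\]

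\textbf{Averaging over $d$.} We expand $S_1=\sum_{k\le x}\sum_{m,n\in\mathcal M}\sum_d\chi_d(kmn)$ and apply the GRH estimate of Darbar--Maiti \cite{DM}. It gives a main term $c X\prod_{p\mid kmn}(1+1/p)^{-1}$ when $kmn=\square$, plus an error $O(X^{1/2+\varepsilon}\cdot(\text{mild dependence on the square-free part}))$. Since $m,n$ are square-free, the condition $kmn=\square$ means $k=\frac{[m,n]}{(m,n)}\ell^2$. The main term is therefore
\[
cX\sum_{m,n}\ \sum_{\ell\le\sqrt{x(m,n)/[m,n]}}(\text{Euler factor}).
\]
Whenever $[m,n]/(m,n)\le x/T$ with $T\to\infty$, the inner sum is $\gg\sqrt{x(m,n)/[m,n]}$. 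The Euler factors are bounded below by $\prod_{p\mid mn\ell}(1+1/p)^{-1}$; the $\ell$-part is harmless on average, and the $mn$-part is handled as in \cite{BT}, since their primes are large and contribute $1+o(1)$ to the exponent. All main-term terms are nonnegative, so we may discard pairs with $[m,n]/(m,n)>x/T$. Because every $m\in\mathcal M$ is at most $N^{O(1)}$ in the relevant sense, we instead choose $\mathcal M\subset[1,x/T]$, which is possible since $x>\exp\{(\log X)^{1/2+\varepsilon}\}$ is far larger than what is needed. Alternatively, Rankin's trick shows that the discarded pairs carry a negligible fraction of the G\'al mass. This gives
\[
S_1\gg X\sqrt x\sum_{m,n}\sqrt{(m,n)/[m,n]}.
\]
For $S_2$ the diagonal condition $mn=\square$ forces $m=n$, so $S_2\ll XN$ plus error. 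Dividing, the G\'al ratio from \cite{BT} yields the claim.

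\textbf{Error terms (main obstacle).} The off-diagonal GRH errors total roughly $X^{1/2+\varepsilon}\,x\,|\mathcal M|^2$, or, using the \cite{DM} square-free-part dependence, roughly $X^{1/2+\varepsilon}\sum_{k\le x}\big(\sum_m 1\big)^2$. These must be $o(X\sqrt x\,N)$. Naively this requires $N x^{1/2}\le X^{1/2-\varepsilon}$. This is exactly why the natural length is $\sqrt X/x$ rather than $\sqrt X/\sqrt x$: one needs to exploit the $\ell$-sum and the fact that the $(kmn)$ square-free part controls the error, so that the total is of size $X^{1/2+o(1)}xN$. I expect this to be the delicate point. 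I would first try bounding the error by $X^{1/2+\varepsilon}(kmn)^{\varepsilon}$ per term, summing trivially to get $X^{1/2+\varepsilon}xN^2$, and then replacing $N^2$ by $N\cdot\max_m\#\{n\}$ via Cauchy--Schwarz on $R$. If that fails, I would take $N=(\sqrt X/x)^{1-o(1)}$ and accept the $o(1)$ loss, which does not affect the constant $2$ since $\log N\sim\log(\sqrt X/x)$. The hypothesis $\sqrt X/x\to\infty$ ensures $N\to\infty$.

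\textbf{Dual sum.} For odd $\chi_d$ with $d<0$, P\'olya's expansion gives
\[
S_d(|d|/x)=\frac{\tau(\chi_d)}{2\pi i}\sum_{1\le|k|\le K}\frac{\overline{\chi_d}(k)}{k}\big(1-e(-k/x)\big)+O(|d|\log|d|/K).
\]
This reduces to $\frac{\sqrt{|d|}}{\pi}\sum_k\frac{\chi_d(k)}{k}\big(1-\cos(2\pi k/x)\big)$ plus the $\sin$ terms, which vanish by parity of $\chi_d$. Inserting $R(d)^2$ and averaging, the condition $kmn=\square$ again gives $k=\frac{[m,n]}{(m,n)}\ell^2$. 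The weight $\frac{1-\cos(2\pi k/x)}{k}$ summed over $\ell$ gives $\gg\sqrt{(m,n)/[m,n]}\cdot\sqrt{x}^{-1}\cdot x$, matching the scale $\sqrt{x}\sqrt{(m,n)/[m,n]}$ when $[m,n]/(m,n)\ll x$. The nonnegativity of $1-\cos$ makes discarding terms legitimate, and the $1/k$ factor only helps the error terms. Therefore the same choice of $\mathcal M$ gives the same constant, divided by $\sqrt{|d|/x}$.
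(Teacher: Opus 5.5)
Your architecture is the paper's: the linear moment $\sum_d S_d(x)R(d)^2$ against $\sum_d R(d)^2$ with the de la Bret\`eche--Tenenbaum square-free set, the Darbar--Maiti average, $k=q(m,n)\ell^2$ with $q(m,n)=[m,n]/(m,n)$, a trivial bound for the non-square terms, and P\'olya's expansion with the $1-\cos$ weight over odd characters for the dual sum.

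\textbf{The gap: localization.} You do not establish that the G\'al mass lives on pairs with $q(m,n)\le x/T$. This is exactly where the hypothesis $\log x>(\log X)^{1/2+\varepsilon}$ has to be used. Your primary device, taking $\mathcal M\subset[1,x/T]$, cannot work, for three reasons. First, $N$ distinct positive integers have maximum at least $N\approx\sqrt X/x$, which exceeds $x$ (let alone $x/T$) for all $x$ below about $X^{1/4}$. Second, even where it fits, $q(m,n)$ can be as large as $mn$, so the containment would not give $q(m,n)\le x/T$. Third, you cannot add constraints to the extremal set without re-proving the G\'al lower bound.

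So the Rankin route is forced, and it needs an actual argument. The paper's runs as follows. For fixed square-free $m$, the map $n\mapsto q(m,n)$ is injective on square-free $n$. Every prime factor of an element of $\mathcal M$ is at most $y_{\mathcal M}\le(\log N)^{1+o(1)}$. Hence $\sum_{n\in\mathcal M}q(m,n)^{-1/2+\eta}\le\prod_{p\le y_{\mathcal M}}(1+p^{-1/2+\eta})\le\exp\{(\log N)^{1/2+\eta+o(1)}\}$. Take $Z=x/(\log X)^2$ and fixed $\eta<\varepsilon/4$. The tail $\sum_{q(m,n)>Z}q(m,n)^{-1/2}\le Z^{-\eta}\sum_{m,n\in\mathcal M}q(m,n)^{-1/2+\eta}$ is then at most $N\exp\{-\eta\log Z+(\log N)^{1/2+\eta+o(1)}\}=o(N)$. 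This is negligible because the diagonal alone contributes $N$ to the G\'al sum.

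\textbf{Error terms.} Your worry here is misplaced. The trivial bound $E_2\ll X^{1/2+\eta+o(1)}xN^2$, against the main term $XN\sqrt x$, only needs $N\sqrt x\le X^{1/2-2\eta}$. That already allows $N=\sqrt X/x$ once $x\ge X^{4\eta}$, so no Cauchy--Schwarz or extra use of the $\ell$-sum is required. (The paper gets $f,g=X^{o(1)}$ from the smoothness of $\mathcal M$ and the submultiplicativity of $f$ and $g$, so it never needs a size bound $m\le N^{O(1)}$.)

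What does need care is the choice of $N$. You cannot save a power of $X$ in $N$ when $T=\sqrt X/x$ is $X^{o(1)}$. The paper therefore splits into two cases:
\begin{enumerate}
\item for $x>X^{1/4}$ it takes $N=\lfloor T\rfloor$, and the slack comes from $\sqrt x$;
\item for $x\le X^{1/4}$ it takes $N=\lfloor X^{1/2-\delta}/x\rfloor$; since $\log T\ge\frac14\log X$, this gives $A_N\ge(\sqrt{1-4\delta}+o(1))A_T$, where $A_T=\sqrt{\log T\log_3T/\log_2T}$.
\end{enumerate}

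\textbf{Dual part.} The sum over $\ell\asymp\sqrt{x/q}$ of $(1-\cos(2\pi q\ell^2/x))/(q\ell^2)$ is $\asymp(xq)^{-1/2}$. So the scale is $x^{-1/2}\sqrt{(m,n)/[m,n]}$, not $\sqrt x\sqrt{(m,n)/[m,n]}$. With $F_d=\sum_{k}\chi_d(k)k^{-1}(1-\cos(2\pi k/x))$, this gives $\max_dF_d\ge x^{-1/2}e^{(2+o(1))A_T}$, and hence the normalization by $\sqrt{|d|/x}$. Relative to this main term, the error constraint is the same as in the direct case, not better.
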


The appearance of
\[
\frac{\sqrt X}{x}
\]
in Theorem~\ref{thm2} reflects the maximal effective size of the
square-free resonating set allowed by the quadratic character
mean-value estimate. The constant $2$ is precisely the constant in the
square-free G\'al-sum extremal problem. Since our resonating moment is
linear in the character sum, this constant survives unchanged in the
final lower bound.

We briefly describe the main ingredients of the proofs. First, we use
the GRH conditional average over fundamental discriminants from
Darbar--Maiti~\cite{DM}. If
\[
n=n_0n_1^2
\]
with $n_0$ square-free, their estimate separates the square main term
from a sufficiently small error depending on $n_0$ and $n_1$. This is
essential for controlling the off-diagonal terms produced by the long
resonators.

Second, in the range of Theorem~\ref{thm1} we use a multiplicative
resonator of Hough type~\cite{Hou}. The corresponding G\'al quotient
can be evaluated by an Euler product, while Rankin's trick shows that
the contribution of pairs with excessively large
\[
[m,n]/(m,n)
\]
is negligible.

Third, in the range of Theorem~\ref{thm2} we replace the Hough
resonator by the square-free extremal sets of de la Bret\`eche and
Tenenbaum~\cite{BT}. The identity
\[
kmn=\square
\quad\Longleftrightarrow\quad
k=\frac{[m,n]}{(m,n)}\ell^2
\]
then converts the square contribution of the quadratic character
average directly into the critical G\'al sum.

Finally, the dual estimates are obtained by combining the resonance
method with P\'olya's Fourier expansion. In particular, after
restricting to odd quadratic characters, the Fourier kernel becomes
non-negative on a suitable range, allowing us to retain the same G\'al
structure as in the direct sums.

The remainder of the paper is organized as follows. In
Section~2 we collect the auxiliary results needed in the sequel,
including P\'olya's Fourier expansion, the conditional quadratic
character mean-value estimate, and the relevant G\'al-sum estimates.
Sections~3 and~4 are devoted to the two assertions of
Theorem~\ref{thm1}. In Sections~5 and~6 we prove the corresponding
statements of Theorem~\ref{thm2}.

\section{Preliminary Lemmas}
\begin{lem}\label{lem2.1}
 Let $\chi({\rm mod}\;q)$ be any primitive character and $0<\alpha<1$. Then we have
$$\sum_{n\le \alpha q}\chi(n)=\frac{\tau(\chi)}{2\pi i}\sum_{1\le|m|\le z}\frac{\overline\chi(m)}{m}(1-e(-\alpha m))+O(1+q\log q/z),$$
where $\tau(\chi):=\sum_{n\le q}\chi(n)e(n/q)$ is the Gauss sum and $e(a):=e^{2\pi ia}$.
\end{lem}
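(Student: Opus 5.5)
This is Pólya's Fourier expansion. The plan is to expand the indicator of $[0,\alpha q]$ by a truncated Fourier series in $n/q$, and then evaluate each Fourier coefficient with Gauss sums.

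Since $\chi$ is primitive, $\chi(n)=\frac{1}{\tau(\overline\chi)}\sum_{a\bmod q}\overline\chi(a)e(an/q)$ for every integer $n$. Substituting this,
$$\sum_{n\le\alpha q}\chi(n)=\frac{1}{\tau(\overline\chi)}\sum_{a\bmod q}\overline\chi(a)\sum_{1\le n\le\alpha q}e(an/q).$$
This is the finite Fourier expansion. Two routes lead from here to the stated form.

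The cleaner route is to introduce the truncated Fourier series of the indicator function directly. Let $f$ be the $1$-periodic function equal to $1$ on $(0,\alpha)$ and $0$ on $(\alpha,1)$, with value $1/2$ at the jumps. Its Fourier coefficients are $\hat f(0)=\alpha$ and
$$\hat f(m)=\frac{1-e(-\alpha m)}{2\pi i m}\qquad (m\ne0).$$
The classical truncation estimate for a function of bounded variation with jumps at $0$ and $\alpha$ is
$$f(t)=\alpha+\sum_{1\le|m|\le z}\hat f(m)e(mt)+O\Big(\min\Big(1,\frac{1}{z\|t\|}\Big)+\min\Big(1,\frac{1}{z\|t-\alpha\|}\Big)\Big).$$
It follows from the bound $\sum_{|m|>z}e(mt)/m\ll\min(1,1/(z\|t\|))$.

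Now apply this with $t=n/q$ and sum $\chi(n)f(n/q)$ over $1\le n\le q$.
\begin{itemize}
\item The left side is $\sum_{n\le\alpha q}\chi(n)+O(1)$. The $O(1)$ accounts for the possible endpoint $n=\alpha q$ with weight $1/2$.
\item The constant term gives $\alpha\sum_{n\le q}\chi(n)=0$.
\item For each $m$, the Gauss-sum identity $\sum_{n\le q}\chi(n)e(mn/q)=\overline\chi(m)\tau(\chi)$ holds, and it is valid for all $m$ because $\chi$ is primitive. This produces the main term exactly:
$$\frac{\tau(\chi)}{2\pi i}\sum_{1\le|m|\le z}\frac{\overline\chi(m)}{m}\bigl(1-e(-\alpha m)\bigr).$$
\end{itemize}

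The only real work is the error term. We need
$$\sum_{n\le q}\min\Big(1,\frac{1}{z\|n/q\|}\Big)\ll 1+\frac{q\log q}{z},$$
and the same bound with $\|n/q-\alpha\|$ in place of $\|n/q\|$. For the first sum, the term $n=q$ contributes $O(1)$. The remaining $n$ contribute at most $\sum_{1\le j\le q/2}\frac{q}{zj}\ll\frac{q\log q}{z}$. For the shifted sum, at most two $n$ have $\|n/q-\alpha\|<1/q$, and these contribute $O(1)$. The rest are spaced by $1/q$ and give the same harmonic bound. Since $|\chi(n)|\le1$, this yields the error $O(1+q\log q/z)$.

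The main obstacle is justifying the pointwise truncation bound uniformly in $t$, including near the jumps. I would derive it by partial summation on $\sum_{m>z}\sin(2\pi m u)/m$, bounding the Dirichlet kernel by $\ll 1/\|u\|$ and using the trivial bound $O(1)$ when $\|u\|<1/z$.
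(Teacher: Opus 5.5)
Your proof is correct, and it is essentially the same argument as the source the paper relies on: the paper gives no proof of its own but cites Montgomery--Vaughan, eq.~(9.19), which is proved exactly this way. There the indicator of $[0,\alpha q]$ is written through the sawtooth $\psi$, whose truncated Fourier series has error $O(\min(1,1/(z\|t\|)))$; the coefficients are evaluated with the Gauss-sum identity $\sum_{n\le q}\chi(n)e(mn/q)=\overline\chi(m)\tau(\chi)$ for primitive $\chi$; and the error is summed over $n$ to give $O(1+q\log q/z)$. The one step you only sketched, the uniform boundedness of $\sum_{m>z}\sin(2\pi m u)/m$ when $\|u\|<1/z$, is classical: split the sum at $m\approx 1/\|u\|$, use $|\sin(2\pi m u)|\le 2\pi m\|u\|$ below that point, and use partial summation above it.
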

\begin{proof}
This is \cite[p.311, Eq. (9.19)]{MVbook}.
\end{proof}
The following conditional estimate for characters has a good error term in use.
    \begin{lem}\label{lem2.2}
	Assuming GRH. Let $n=n_0n_1^2$ be a positive integer with $n_0$ the square-free part of $n$.
	 Then for any $\varepsilon>0$, we obtain
	\begin{align*}
	\sum_{|d|\le X\atop d\in\F} \chi_{d}(n)=\frac{X}{\zeta(2)}\prod_{p|n}\frac{p}{p+1}{1}_{n=\square}+ O\left(X^{\frac12+\varepsilon}f(n_0)g(n_1)\right),
	\end{align*}
	where   ${{1}}_{n=\square}$ indicates the indicator function of the square numbers, and
    $$f(n_0)=\exp((\log n_0)^{1-\varepsilon}),\;\;\;\;g(n_1)=\sum_{d|n_1}\frac{\mu(d)^2}{d^{\frac12+\varepsilon}}.$$
\end{lem}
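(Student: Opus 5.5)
This estimate is essentially the GRH mean-value result of Darbar--Maiti~\cite{DM}, and in the paper it would be quoted. To prove it directly, I would first reduce to a sum over $d$ with $\chi_d(n)=\chi_d(n_0)\mathbf 1_{(d,n_1)=1}$. Then I would handle the square case and the non-square case separately.

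\emph{Parametrizing and sieving.} Fundamental discriminants are parametrized as odd square-free $d\equiv1\pmod 4$, together with $d=4m$ where $m\equiv 2,3\pmod 4$ is square-free. I would treat each residue class separately, so it suffices to handle sums of the form $\sum_{d\le X,\ d\equiv a \,(4)}\mu^2(d)\chi_d(n)$. The square-free condition is removed with $\mu^2(d)=\sum_{\ell^2\mid d}\mu(\ell)$. I would split at a parameter $\ell\le Z$ versus $\ell>Z$; the large-$\ell$ range is trivially $O(X/Z)$.

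\emph{Square case.} If $n=\square$, then $n_0=1$ and $\chi_d(n)=\mathbf 1_{(d,n)=1}$. Counting fundamental discriminants coprime to $n$ gives the main term $\frac{X}{\zeta(2)}\prod_{p\mid n}\frac{p}{p+1}$; the local factor at $p\mid n$ is $(1-1/p)/(1-1/p^2)=p/(p+1)$. The error term comes from the square-free sieve and the coprimality condition. Handling the coprimality by M\"obius inversion over $e\mid n_1$ produces divisor-type sums, which is the origin of $g(n_1)$. Taking $Z\approx X^{1/2}$ makes this error $O(X^{1/2+\varepsilon}g(n_1))$.

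\emph{Non-square case.} If $n_0>1$, then $d\mapsto\chi_d(n_0)$ is, by quadratic reciprocity, a non-principal character $\psi$ of conductor $\asymp n_0$ (or $4n_0$), twisted by the coprimality condition to $n_1$. The terms with $\ell\le Z$ reduce to sums $\sum_{m\le X/\ell^2}\psi(m)\mathbf 1_{(m,n_1)=1}$. I would estimate these by Perron's formula, shifting the contour to $\Re s=1/2+\varepsilon$ for $L(s,\psi)$. Under GRH, $\log|L(1/2+\varepsilon+it,\psi)|\ll(\log n_0(|t|+2))^{1-2\varepsilon}/\log_2$, which gives the factor $f(n_0)=\exp((\log n_0)^{1-\varepsilon})$ instead of the convexity bound. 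The coprimality twist contributes the Euler factor $\prod_{p\mid n_1}(1-\psi(p)p^{-s})$, and at $\Re s=1/2+\varepsilon$ this is bounded by $g(n_1)$. Summing over $\ell$ then gives $\sum_\ell (X/\ell^2)^{1/2+\varepsilon}f(n_0)g(n_1)\ll X^{1/2+\varepsilon}f(n_0)g(n_1)$. Because the GRH bound is so strong, the cutoff $Z$ can be taken very large, about $X$, so the $\ell>Z$ tail is negligible.

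\emph{Main obstacle.} The delicate step is handling the $\ell$-sum and the $t$-integral without losing powers of $n_0$. I would truncate Perron at height $T=X^{2}$ and use the GRH Lindel\"of-type bound uniformly in $t$. The horizontal segments and the truncation error are then $O(X^{\varepsilon})$, and the final bound is uniform in $n$.
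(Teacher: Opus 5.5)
The paper gives no argument for this lemma; it only says the estimate is Lemma~1 of Darbar--Maiti~\cite{DM}, which matches your opening remark. Your direct sketch is therefore an independent reconstruction. It is the standard route, and it is sound in outline. You write $\chi_d(n)=\chi_d(n_0)\mathbf 1_{(d,n_1)=1}$ and detect the residue classes of fundamental discriminants by characters mod $8$; every resulting twist of $d\mapsto\chi_d(n_0)$ is non-principal when $n_0>1$. You then shift the contour to $\Re s=\frac12+\varepsilon$. There GRH gives $|L(\frac12+\varepsilon+it,\psi)|\le\exp\{C(\log n_0(|t|+2))^{1-2\varepsilon}\}$, and the coprimality factor satisfies $\prod_{p\mid n_1}|1-\psi(p)p^{-s}|\le g(n_1)$.

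What this buys over the citation is transparency. GRH enters only in the non-square case, through this Lindel\"of-type bound; $f(n_0)$ comes from the conductor and $g(n_1)$ from the Euler factor. Two minor points:
\begin{itemize}
\item Since $\ell^2\le X$, there is no $\ell>Z$ tail at all.
\item The $t$-integral up to $T=X^2$ costs $\exp\{O_\varepsilon((\log X)^{1-2\varepsilon})\}=X^{o(1)}$. So you really obtain $X^{1/2+\varepsilon+o(1)}f(n_0)g(n_1)$, which is harmless because the paper only ever uses $X^{1/2+\eta+o(1)}$.
\end{itemize}

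The step that needs more care than you indicate is the error term in the square case. Suppose you expand $\sum_{e\mid n_1}\mu(e)\sum_{\ell}\mu(\ell)\bigl(X/[e,\ell^2]+O(1)\bigr)$ over all $e\mid n_1$. This bookkeeping only gives an error $O(\sqrt X\,2^{\omega(n_1)})$, which is not $O(X^{1/2+\varepsilon}g(n_1))$ uniformly in $n_1$. For example, if $n_1$ has $(\log X)^2$ prime factors, all exceeding $(\log X)^{10}$, then $g(n_1)=O(1)$ while $2^{\omega(n_1)}$ exceeds every power of $X$.

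This uniformity is not cosmetic for the paper. In the remark after the lemma it bounds $g(n_1)\le\exp(P_+(n)^{1/2-\varepsilon})$. It needs that bound in Section~5 for the very smooth but very large elements of $\mathcal M$. A factor $2^{\omega(n_1)}$ only admits $\exp\{O(P_+(n)/\log P_+(n))\}$, which for $P_+(n)\le(\log N)^{1+o(1)}$ is not $X^{o(1)}$ in general.

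The fix is to use that only $k\le X$ contribute. Write $\mu^2(d)\mathbf 1_{(d,n_1)=1}=\sum_{k\mid d}b(k)$ with $\sum_k b(k)k^{-s}=\zeta(2s)^{-1}\prod_{p\mid n_1}(1+p^{-s})^{-1}$. The count is then $\sum_{k\le X}b(k)\lfloor X/k\rfloor$. Both the error $\sum_{k\le X}|b(k)|$ and the main-term tail $X\sum_{k>X}|b(k)|/k$ are at most $X^{1/2+\varepsilon}\sum_k|b(k)|k^{-1/2-\varepsilon}$ by Rankin's trick. This is $\ll_\varepsilon X^{1/2+\varepsilon}g(n_1)$, because $\prod_{p\mid n_1}(1-p^{-1/2-\varepsilon})^{-1}\le\zeta(1+2\varepsilon)\,g(n_1)$. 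The non-principal characters mod $8$ that arise in the square case are handled the same way, and this part of the lemma is then unconditional.
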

\begin{proof}
    This follows directly from Lemma 1 of \cite{DM}.
\end{proof}
On the one hand, it is clear that 
$$f(n_0)\le n_0^\varepsilon\le n^\varepsilon,\;\;\;\;g(n_1)\le n_1^\varepsilon\le n^\varepsilon.$$
On the other hand, if we denote the largest prime factor of $n$ by $P_+(n)$, then $n_0,n_1\le \prod_{p\le P_+(n)}p$.
So easily we have
$$f(n_0)\le\exp\big(P_+(n)^{1-\varepsilon}\big),\;\;\;\;\;g(n_1)\le\exp\big(P_+(n)^{\frac12-\varepsilon}\big).$$

\begin{lem}\label{GCD}
    Let $\M$ be any set of positive squarefree integers with $|\M|=N$. Then as $N\to\infty$, we have
    $$\max_{|\M|=N}\sum_{m,n\in\M}\sqrt{\frac{(m,n)}{[m,n]}}=N\exp\bigg((2+o(1))\sqrt{\frac{\log N\log_3N}{\log_2N}}\bigg).$$
\end{lem}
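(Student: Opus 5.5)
This statement is the square-free G\'al-sum theorem of de la Bret\`eche and Tenenbaum~\cite{BT}, and the plan is to deduce it directly from their work rather than reprove it. I would first recall that for an arbitrary finite set $\M$ of positive integers they proved
\[
\max_{|\M|=N}\frac1N\sum_{m,n\in\M}\sqrt{\frac{(m,n)}{[m,n]}}
=\exp\bigg((2\sqrt2+o(1))\sqrt{\frac{\log N\log_3N}{\log_2N}}\bigg),
\]
and that the square-free analogue with constant $2$ is established in the same work. The task is then to check that the square-free version is stated there with this normalization, and to point out where the constant $2$, rather than $2\sqrt2$, comes from.

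For the lower bound I would use the explicit square-free construction. Take the primes $p$ in a window $L\log L<p\le L^{c}\log L$, with $L=\sqrt{\log N\log_2N/\log_3N}$ up to an adjusted constant. Give them weights $f(p)\approx L/(\sqrt p\log p)$, and let $\M$ consist of the square-free integers composed of these primes that carry large multiplicative weight. These are the integers $m$ for which $\sum_{p\mid m}\log(1/f(p))$ is atypically small, chosen so that $|\M|=N$. A Rankin-type, or probabilistic, computation of $\frac1N\sum_{m,n}\sqrt{(m,n)/[m,n]}$ then runs over triples $(a,b,c)$ with $m=ab$, $n=ac$ and $b,c$ coprime. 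This yields the Euler product $\prod_p\bigl(1+2f(p)/\sqrt p\bigr)$ normalized by $\prod_p\bigl(1+f(p)^2\bigr)$. Square-freeness forbids terms like $p^2$ in $m$, which would otherwise contribute extra mass, and this is exactly what lowers the constant from $2\sqrt2$ to $2$. Optimizing the choice of $L$ gives $\exp\bigl((2+o(1))\sqrt{\log N\log_3N/\log_2N}\bigr)$.

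For the upper bound I would follow the argument of \cite{BT}. The key step, and the main obstacle, is to bound the square-free G\'al sum by $N$ times a supremum over multiplicative weights through a Cauchy--Schwarz/duality (Rankin) argument. The resulting quantity is then bounded by $\sup_f\sum_p\bigl(f(p)/\sqrt p-\tfrac12 f(p)^2\cdots\bigr)$-type optimizations. These lead to $\exp\bigl((2+o(1))\sqrt{\log N\log_3N/\log_2N}\bigr)$ because in the square-free setting each prime contributes at most to first order. Since this part is delicate, I would simply cite \cite{BT} for it. In the paper's proofs only the lower-bound construction is actually needed, and it is what feeds into Theorem~\ref{thm2}.
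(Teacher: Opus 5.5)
Your main plan, which is to quote the square-free G\'al-sum asymptotic of de la Bret\`eche and Tenenbaum rather than reprove it, is exactly what the paper does. Its proof is the single sentence ``This is Eq.~(1.5) of \cite{BT}'', and that citation alone settles the lemma. Your recollection that the unrestricted problem has constant $2\sqrt2$ is also consistent with \cite{BT}. When you cite, also record the property noted right after the lemma: the extremal set satisfies $\max_{m\in\M}P_+(m)\le(\log N)^{1+o(1)}$. The paper relies on this in Section~5 to control $f$, $g$, $h(mn)$ and the localization of the G\'al sum.

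The lower-bound sketch you add, however, would not work if you used it in place of the citation.
\begin{enumerate}
\item \emph{The weights are wrong.} The weights $f(p)\approx L/(\sqrt p\log p)$ are the Soundararajan--Hough weights for a \emph{length} constraint. They are exactly the ones used in Section~3 of the paper, where the cost of a prime is $f(p)^2\log p$, and that section gets no factor $\log_3$. Under the \emph{cardinality} constraint $|\M|=N$ the cost is instead the entropy $\sum_p f(p)^2\log(1/f(p)^2)$. With your weights, holding the entropy at $\log N$ limits the gain $\sum_p 2f(p)/\sqrt p$ to order $\sqrt{\log N/\log_2N}$, with no factor $\log_3N$, whatever window you take.
\item \emph{What the entropy constraint requires.} It dictates a logarithm of $p/P$ rather than of $p$. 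This is the Bondarenko--Seip shape $f(p)\approx L/(\sqrt p\log(p/P))$ with $P=\log N\log_2N$, supported on $eP<p\le P\exp\{(\log_2N)^{\gamma}\}$ with $\gamma<1$. Then $\sum_p 1/(p\log(p/P))\sim\gamma\log_3N/\log_2N$, which is where $\log_3N$ comes from. The heuristic optimization tends to the constant $2$ as $\gamma\to1$, and all the primes involved are $(\log N)^{1+o(1)}$.
\item \emph{The window is at the wrong scale.} Your window $L\log L<p\le L^c\log L$ with $L\asymp\sqrt{\log N\log_2N/\log_3N}$ starts at $(\log N)^{1/2+o(1)}$.
\item \emph{The Euler product is misstated.} The weighted quotient is $\prod_p\bigl(1+2f(p)/(\sqrt p(1+f(p)^2))\bigr)$, not $\prod_p(1+2f(p)/\sqrt p)$ divided by $\prod_p(1+f(p)^2)$.
\item \emph{The transfer to an unweighted set is not justified.} Passing from such a weighted quotient to an \emph{unweighted} set of exactly $N$ square-free integers is the genuinely delicate step of the construction. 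A Rankin computation over $m=ab$, $n=ac$ does not supply it by itself.
\end{enumerate}
None of this affects the lemma if, as in the paper, you simply cite \cite{BT}.
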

\begin{proof}
    This is Eq. (1.5) of \cite{BT}.
\end{proof}
Note that in the proof of the above lemma, the choice for the set $\M$ satisfies $y_\M:=\max_{m\in\M}P_+(m)\le (\log N)^{1+o(1)}$.

\section{Proof of the first part of Theorem \ref{thm1}}

We prove that, under GRH, uniformly in the range
\[
4\sqrt{\log X\log_2X}\,\log_3X
<
\log x
<
(\log X)^{1/2+\varepsilon},
\]
we have
\begin{equation*}
\max_{\substack{X<|d|\le2X\\ d\in\mathcal F}}
\frac{1}{\sqrt{x}}
\sum_{n\le x}\chi_d(n)
\ge
\exp\left\{
(\sqrt{2}+o(1))
\sqrt{\frac{\log X}{\log_2X}}
\right\}.
\end{equation*}

Fix sufficiently small constants
\[
0<\delta<\frac{1}{100},
\qquad
0<\rho<\frac{1}{100},
\]
and put
\begin{equation*}
Y:=\frac{X^{1/2-\delta}}{x}.
\end{equation*}
Since
\[
\log x=o(\log X),
\]
we have
\begin{equation}
\log Y
=
\left(\frac12-\delta+o(1)\right)\log X
\label{eq:logY}
\end{equation}
and
\begin{equation}
\log_2Y=\log_2X+O(1).
\label{eq:log2Y}
\end{equation}

Put
\begin{equation}
\lambda
:=
\sqrt{(1-\rho)\log Y\log_2Y}.
\label{eq:lambda-def}
\end{equation}
We define a non-negative multiplicative function $r$ supported on
square-free integers by setting
\begin{equation}
r(p)
=
\frac{\lambda}{\sqrt p\log p}
\label{eq:r-p}
\end{equation}
for
\[
\lambda^2\le p\le \exp\{(\log\lambda)^2\},
\]
and
\[
r(p)=0
\]
for all other primes. Moreover,
\[
r(p^\nu)=0
\qquad(\nu\ge2).
\]
Denote
\[
\mathcal P
:=
\left\{
p:
\lambda^2\le p\le
\exp\{(\log\lambda)^2\}
\right\}.
\]

We shall first establish the G\'al-type estimate associated with this
resonator.

\subsection{The G\'al quotient}

Consider
\[
\mathcal G(r)
:=
\frac{
\displaystyle
\sum_{m,n\ge1}
r(m)r(n)
\sqrt{\frac{(m,n)}{[m,n]}}
}{
\displaystyle
\sum_{n\ge1}r(n)^2
}.
\]
Since $r$ is multiplicative and supported on square-free integers,
Euler factorization gives
\begin{align*}
\sum_{m,n\ge1}
r(m)r(n)
\sqrt{\frac{(m,n)}{[m,n]}}
&=
\prod_{p\in\mathcal P}
\left(
1+r(p)^2+\frac{2r(p)}{\sqrt p}
\right),
\end{align*}
whereas
\begin{equation*}
\sum_{n\ge1}r(n)^2
=
\prod_{p\in\mathcal P}
(1+r(p)^2).
\end{equation*}
Therefore
\begin{equation*}
\mathcal G(r)
=
\prod_{p\in\mathcal P}
\left(
1+
\frac{2r(p)}
{\sqrt p(1+r(p)^2)}
\right).
\end{equation*}

Since
\[
r(p)\ll\frac1{\log\lambda}=o(1),
\]
we have
\begin{align*}
\log\mathcal G(r)
&=
2\sum_{p\in\mathcal P}
\frac{r(p)}{\sqrt p}
+
o\left(
\sqrt{\frac{\log Y}{\log_2Y}}
\right)
\nonumber\\
&=
2\lambda
\sum_{p\in\mathcal P}
\frac1{p\log p}
+
o\left(
\sqrt{\frac{\log Y}{\log_2Y}}
\right).
\end{align*}
By the prime number theorem and partial summation,
\begin{align}
\sum_{\lambda^2\le p\le
\exp\{(\log\lambda)^2\}}
\frac1{p\log p}
&=
\frac{1+o(1)}{2\log\lambda}.
\label{eq:prime-sum}
\end{align}
Consequently,
\[
\log\mathcal G(r)
=
(1+o(1))
\frac{\lambda}{\log\lambda}.
\]
Since
\[
\log\lambda
=
\left(\frac12+o(1)\right)\log_2Y,
\]
it follows from \eqref{eq:lambda-def} that
\begin{equation*}
\log\mathcal G(r)
=
\left(
2\sqrt{1-\rho}+o(1)
\right)
\sqrt{\frac{\log Y}{\log_2Y}}.
\end{equation*}
Thus
\begin{equation}
\mathcal G(r)
=
\exp\left\{
\left(
2\sqrt{1-\rho}+o(1)
\right)
\sqrt{\frac{\log Y}{\log_2Y}}
\right\}.
\label{eq:gal-final}
\end{equation}

\subsection{Truncation of the resonator}

We next show that the main mass of the resonator is supported on
$n\le Y$. Put
\begin{equation*}
\alpha:=\frac1{(\log\lambda)^3}.
\end{equation*}
By Rankin's trick,
\[
\sum_{n>Y}r(n)^2
\le
Y^{-\alpha}
\sum_{n\ge1}r(n)^2n^\alpha.
\]
Hence
\begin{align*}
\frac{\sum_{n>Y}r(n)^2}
{\sum_{n\ge1}r(n)^2}
&\le
Y^{-\alpha}
\prod_{p\in\mathcal P}
\frac{1+r(p)^2p^\alpha}
{1+r(p)^2}.
\end{align*}
Since
\[
\alpha\log p
\le
\frac1{\log\lambda}
=o(1)
\]
for $p\in\mathcal P$, we have
\[
p^\alpha-1
=
(1+o(1))\alpha\log p.
\]
Therefore
\begin{align*}
\log
\prod_{p\in\mathcal P}
\frac{1+r(p)^2p^\alpha}
{1+r(p)^2}
&\le
(1+o(1))
\alpha
\sum_{p\in\mathcal P}
r(p)^2\log p.
\end{align*}
By \eqref{eq:r-p} and \eqref{eq:prime-sum},
\begin{align*}
\sum_{p\in\mathcal P}r(p)^2\log p
&=
\lambda^2
\sum_{p\in\mathcal P}
\frac1{p\log p}
\nonumber\\
&=
(1+o(1))
\frac{\lambda^2}{2\log\lambda}
\nonumber\\
&=
(1-\rho+o(1))\log Y.
\end{align*}
Thus
\[
\frac{\sum_{n>Y}r(n)^2}
{\sum_{n\ge1}r(n)^2}
\le
\exp\left\{
-(\rho+o(1))\alpha\log Y
\right\}
=o(1).
\]
Consequently,
\begin{equation}
\sum_{n\le Y}r(n)^2
=
(1+o(1))
\sum_{n\ge1}r(n)^2.
\label{eq:L2-concentration}
\end{equation}

The same Rankin argument applied to the G\'al quadratic form gives
\begin{align}
&\sum_{\substack{m,n\le Y}}
r(m)r(n)
\sqrt{\frac{(m,n)}{[m,n]}}
\nonumber\\
&\qquad=
(1+o(1))
\sum_{m,n\ge1}
r(m)r(n)
\sqrt{\frac{(m,n)}{[m,n]}}.
\label{eq:gal-concentration}
\end{align}
Indeed, the additional first logarithmic moment arising from the
off-diagonal local factors is bounded by
\[
\lambda
\sum_{p\in\mathcal P}\frac1p
\ll
\lambda\log_2\lambda
=
o(\log Y),
\]
and hence it is absorbed by the factor $\rho\log Y$ in the Rankin
argument.

\subsection{Localization of the G\'al sum}

For $m,n\ge1$, put
\begin{equation*}
q(m,n):=\frac{[m,n]}{(m,n)}.
\end{equation*}
We now prove that the contribution of pairs satisfying
\[
q(m,n)>\frac{x}{(\log X)^2}
\]
is negligible.

By Rankin's trick,
\[
{\bf1}_{q(m,n)>x/(\log X)^2}
\le
\left(
\frac{q(m,n)(\log X)^2}{x}
\right)^\alpha.
\]
Hence, after dividing by the complete G\'al quadratic form, the
contribution of such pairs is at most
\begin{align}
\left(\frac{(\log X)^2}{x}\right)^\alpha
\prod_{p\in\mathcal P}
\frac{
1+r(p)^2+2r(p)p^{-1/2+\alpha}
}{
1+r(p)^2+2r(p)p^{-1/2}
}.
\label{eq:q-tail-product}
\end{align}
Taking logarithms and using
\[
p^\alpha-1
=
(1+o(1))\alpha\log p,
\]
we obtain
\begin{align*}
\log(\text{right-hand side of \eqref{eq:q-tail-product}})
&\le
-\alpha\log x
+
2\alpha\log_2X
\nonumber\\
&\quad+
(2+o(1))
\alpha
\sum_{p\in\mathcal P}
\frac{r(p)\log p}{\sqrt p}.
\end{align*}
Now
\begin{align*}
\sum_{p\in\mathcal P}
\frac{r(p)\log p}{\sqrt p}
&=
\lambda
\sum_{p\in\mathcal P}\frac1p
\nonumber\\
&\ll
\lambda\log_2\lambda.
\end{align*}
Therefore
\begin{equation*}
\log(\text{right-hand side of \eqref{eq:q-tail-product}})
\le
-\alpha
\left(
\log x-(2+o(1))\lambda\log_2\lambda
\right).
\end{equation*}

By \eqref{eq:logY} and \eqref{eq:lambda-def},
\[
\lambda
=
\left(
\sqrt{(1-\rho)\left(\frac12-\delta\right)}
+o(1)
\right)
\sqrt{\log X\log_2X},
\]
whereas
\[
\log_2\lambda
=
\log_3X+O(1).
\]
Thus
\begin{align*}
2\lambda\log_2\lambda
&=
\left(
2\sqrt{(1-\rho)\left(\frac12-\delta\right)}
+o(1)
\right)
\sqrt{\log X\log_2X}\,
\log_3X.
\end{align*}
Since
\[
2\sqrt{(1-\rho)\left(\frac12-\delta\right)}
<\sqrt2<4
\]
and
\[
\log x
>
4\sqrt{\log X\log_2X}\,\log_3X,
\]
we obtain
\[
\log x-(2+o(1))\lambda\log_2\lambda
\gg
\sqrt{\log X\log_2X}\,\log_3X.
\]
Since $\alpha=(\log\lambda)^{-3}$, it follows that
\[
\alpha
\left(
\log x-(2+o(1))\lambda\log_2\lambda
\right)
\longrightarrow+\infty.
\]
Consequently,
\begin{equation}
\sum_{\substack{m,n\le Y\\
q(m,n)>x/(\log X)^2}}
r(m)r(n)q(m,n)^{-1/2}
=
o\left(
\sum_{m,n\le Y}
r(m)r(n)q(m,n)^{-1/2}
\right).
\label{eq:q-localization}
\end{equation}

Combining \eqref{eq:gal-final},
\eqref{eq:L2-concentration},
\eqref{eq:gal-concentration}, and
\eqref{eq:q-localization}, we obtain
\begin{align}
&\frac{
\displaystyle
\sum_{\substack{m,n\le Y\\
q(m,n)\le x/(\log X)^2}}
r(m)r(n)q(m,n)^{-1/2}
}{
\displaystyle
\sum_{n\le Y}r(n)^2
}
\nonumber\\
&\qquad\ge
\exp\left\{
\left(
2\sqrt{1-\rho}+o(1)
\right)
\sqrt{\frac{\log Y}{\log_2Y}}
\right\}.
\label{eq:truncated-gal}
\end{align}

\subsection{The resonance moments}

Define
\begin{equation*}
R(d):=\sum_{m\le Y}r(m)\chi_d(m)
\end{equation*}
and
\begin{equation*}
M_1
:=
\sum_{\substack{X<|d|\le2X\\d\in\mathcal F}}
R(d)^2.
\end{equation*}
We also define
\begin{equation*}
M_2
:=
\sum_{\substack{X<|d|\le2X\\d\in\mathcal F}}
\left(
\sum_{k\le x}\chi_d(k)
\right)
R(d)^2.
\end{equation*}
Notice that the character sum in $M_2$ is not squared.

Since $R(d)^2\ge0$, we have
\begin{equation}
\max_{\substack{X<|d|\le2X\\d\in\mathcal F}}
\sum_{k\le x}\chi_d(k)
\ge
\frac{M_2}{M_1},
\label{eq:resonance-quotient-thm1}
\end{equation}
provided that $M_2>0$.

We first estimate $M_1$. Expanding the square,
\[
M_1
=
\sum_{m,n\le Y}
r(m)r(n)
\sum_{\substack{X<|d|\le2X\\d\in\mathcal F}}
\chi_d(mn).
\]
Since $r$ is supported on square-free integers,
\[
mn=\square
\quad\Longleftrightarrow\quad
m=n.
\]
Applying Lemma~\ref{lem2.2}, we obtain
\begin{equation*}
M_1
\le
c_0X
\sum_{m\le Y}r(m)^2
+
E_1,
\end{equation*}
where
\[
E_1
\ll
X^{1/2+\eta+o(1)}
\left(
\sum_{m\le Y}r(m)
\right)^2.
\]
By Cauchy's inequality,
\[
\left(
\sum_{m\le Y}r(m)
\right)^2
\le
Y\sum_{m\le Y}r(m)^2.
\]
Hence
\begin{equation*}
E_1
\ll
X^{1/2+\eta+o(1)}
Y
\sum_{m\le Y}r(m)^2.
\end{equation*}
Since
\[
Y=\frac{X^{1/2-\delta}}{x}
\]
and $0<\eta<\delta$, we have
\[
X^{1/2+\eta}Y
=
\frac{X^{1-\delta+\eta}}{x}
=o(X).
\]
Therefore
\begin{equation}
M_1
\le
(c_0+o(1))
X\sum_{m\le Y}r(m)^2.
\label{eq:M1-final-thm1}
\end{equation}

We now turn to $M_2$. Expanding the sums, we obtain
\[
M_2
=
\sum_{k\le x}
\sum_{m,n\le Y}
r(m)r(n)
\sum_{\substack{X<|d|\le2X\\d\in\mathcal F}}
\chi_d(kmn).
\]
Applying Lemma~\ref{lem2.2}, we write
\[
M_2=M_{2,\square}+E_2,
\]
where
\begin{equation*}
M_{2,\square}
=
c_0X
\sum_{\substack{k\le x,\;m,n\le Y\\
kmn=\square}}
r(m)r(n)
\prod_{p\mid kmn}\frac{p}{p+1}.
\end{equation*}

For $m,n$ in the support of $r$, put
\[
q(m,n)=\frac{[m,n]}{(m,n)}.
\]
Since $m$ and $n$ are square-free,
\begin{equation*}
kmn=\square
\quad\Longleftrightarrow\quad
k=q(m,n)\ell^2
\end{equation*}
for some positive integer $\ell$.

Define
\[
h(v):=\prod_{p\mid v}\frac{p}{p+1}.
\]
Then
\[
M_{2,\square}
=
c_0X
\sum_{m,n\le Y}
r(m)r(n)
\sum_{\ell\le\sqrt{x/q(m,n)}}
h(mn\ell).
\]
Since
\[
h(mn\ell)\ge h(mn)h(\ell),
\]
we obtain, after restricting to
\[
q(m,n)\le\frac{x}{(\log X)^2},
\]
that
\begin{align*}
M_{2,\square}
&\ge
c_0X
\sum_{\substack{m,n\le Y\\
q(m,n)\le x/(\log X)^2}}
r(m)r(n)h(mn)
\sum_{\ell\le\sqrt{x/q(m,n)}}h(\ell).
\end{align*}
Since
\[
h(\ell)
\ge
\frac{\varphi(\ell)}{\ell}
\]
and
\[
\sum_{\ell\le L}\frac{\varphi(\ell)}{\ell}
=
\frac6{\pi^2}L+O(\log L),
\]
we have
\[
\sum_{\ell\le L}h(\ell)\gg L.
\]
For the pairs under consideration,
\[
L=\sqrt{\frac{x}{q(m,n)}}\ge\log X,
\]
so this estimate is uniform. Therefore
\begin{align}
M_{2,\square}
&\gg
X\sqrt{x}
\sum_{\substack{m,n\le Y\\
q(m,n)\le x/(\log X)^2}}
r(m)r(n)
h(mn)q(m,n)^{-1/2}.
\label{eq:M2-gal-thm1}
\end{align}

All prime factors occurring in $m$ and $n$ belong to $\mathcal P$.
Hence
\[
h(mn)
\ge
\prod_{p\in\mathcal P}\frac{p}{p+1}.
\]
Moreover,
\[
\log
\prod_{p\in\mathcal P}\frac{p}{p+1}
=
-O\left(
\sum_{p\in\mathcal P}\frac1p
\right)
=
-O(\log_2\lambda).
\]
Thus
\begin{equation}
h(mn)
\ge
\exp\{-O(\log_2\lambda)\}
=
\exp\left\{
-o\left(
\sqrt{\frac{\log Y}{\log_2Y}}
\right)
\right\}.
\label{eq:h-loss-thm1}
\end{equation}
Combining \eqref{eq:M2-gal-thm1},
\eqref{eq:h-loss-thm1}, and
\eqref{eq:truncated-gal}, we obtain
\begin{align}
M_{2,\square}
&\ge
X\sqrt{x}
\left(
\sum_{m\le Y}r(m)^2
\right)
\nonumber\\
&\qquad\times
\exp\left\{
\left(
2\sqrt{1-\rho}+o(1)
\right)
\sqrt{\frac{\log Y}{\log_2Y}}
\right\}.
\label{eq:M2-main-final-thm1}
\end{align}

It remains to estimate the non-square contribution $E_2$. By
Lemma~\ref{lem2.2} and the smooth support of the
resonator,
\[
E_2
\ll
X^{1/2+\eta+o(1)}
x
\left(
\sum_{m\le Y}r(m)
\right)^2.
\]
By Cauchy's inequality,
\begin{align*}
E_2
&\ll
X^{1/2+\eta+o(1)}
xY
\sum_{m\le Y}r(m)^2.
\end{align*}
Since
\[
xY=X^{1/2-\delta},
\]
we have
\[
E_2
\ll
X^{1-\delta+\eta+o(1)}
\sum_{m\le Y}r(m)^2.
\]
Taking $0<\eta<\delta$, we conclude that
\begin{equation*}
E_2
=
o\left(
X\sum_{m\le Y}r(m)^2
\right).
\end{equation*}
In particular, $E_2$ is negligible compared with
\eqref{eq:M2-main-final-thm1}. Hence
\begin{align}
M_2
&\ge
X\sqrt{x}
\left(
\sum_{m\le Y}r(m)^2
\right)
\nonumber\\
&\qquad\times
\exp\left\{
\left(
2\sqrt{1-\rho}+o(1)
\right)
\sqrt{\frac{\log Y}{\log_2Y}}
\right\}.
\label{eq:M2-final-thm1}
\end{align}

Combining \eqref{eq:resonance-quotient-thm1},
\eqref{eq:M1-final-thm1}, and
\eqref{eq:M2-final-thm1}, we obtain
\begin{align*}
\frac1{\sqrt{x}}
\max_{\substack{X<|d|\le2X\\d\in\mathcal F}}
\sum_{k\le x}\chi_d(k)
&\ge
\exp\left\{
\left(
2\sqrt{1-\rho}+o(1)
\right)
\sqrt{\frac{\log Y}{\log_2Y}}
\right\}.
\end{align*}

Finally, by \eqref{eq:logY} and \eqref{eq:log2Y},
\[
\sqrt{\frac{\log Y}{\log_2Y}}
=
\left(
\sqrt{\frac12-\delta}+o(1)
\right)
\sqrt{\frac{\log X}{\log_2X}}.
\]
Therefore
\begin{align*}
2\sqrt{1-\rho}
\sqrt{\frac{\log Y}{\log_2Y}}
&=
\left(
\sqrt{2(1-\rho)(1-2\delta)}
+o(1)
\right)
\sqrt{\frac{\log X}{\log_2X}}.
\end{align*}
Thus, for every fixed sufficiently small $\rho,\delta>0$,
\[
\frac1{\sqrt{x}}
\max_{\substack{X<|d|\le2X\\d\in\mathcal F}}
\sum_{k\le x}\chi_d(k)
\ge
\exp\left\{
\left(
\sqrt{2(1-\rho)(1-2\delta)}
+o(1)
\right)
\sqrt{\frac{\log X}{\log_2X}}
\right\}.
\]
First letting $X\to\infty$, and then letting
$\rho\to0^+$ and $\delta\to0^+$, we finally obtain
\[
 {
\max_{\substack{X<|d|\le2X\\d\in\mathcal F}}
\frac1{\sqrt{x}}
\sum_{n\le x}\chi_d(n)
\ge
\exp\left\{
(\sqrt2+o(1))
\sqrt{\frac{\log X}{\log_2X}}
\right\}.
}
\]
This completes the proof of the first assertion of
Theorem~\ref{thm1}.
\section{Proof of the second part of Theorem \ref{thm1}}

We prove that, under GRH, uniformly in the range
\[
4\sqrt{\log X\log_2X}\,\log_3X
<
\log x
<
(\log X)^{1/2+\varepsilon},
\]
we have
\begin{equation*}
\max_{\substack{X<|d|\le2X\\d\in\mathcal F}}
\frac{1}{\sqrt{|d|/x}}
\sum_{n\le |d|/x}\chi_d(n)
\ge
\exp\left\{
(\sqrt2+o(1))
\sqrt{\frac{\log X}{\log_2X}}
\right\}.
\end{equation*}

We restrict ourselves to negative fundamental discriminants and put
\[
\mathcal F_X^-:=
\{d\in\mathcal F:-2X\le d<-X\}.
\]
It is enough to prove the desired lower bound over this subfamily.

For $d\in\mathcal F_X^-$, the primitive quadratic character
$\chi_d$ is odd, and
\[
\tau(\chi_d)=i\sqrt{|d|}.
\]
We apply Lemma~\ref{lem2.1} with
\[
\alpha=\frac1x.
\]
Choose
\begin{equation*}
z:=\sqrt{2Xx}\log(2X).
\end{equation*}
Then
\[
1+\frac{|d|\log|d|}{z}
\ll
\sqrt{\frac{X}{x}}.
\]
Hence P\'olya's Fourier expansion gives
\begin{align*}
\sum_{n\le |d|/x}\chi_d(n)
&=
\frac{\tau(\chi_d)}{2\pi i}
\sum_{1\le |k|\le z}
\frac{\chi_d(k)}{k}
\left(1-e(-k/x)\right)
+
O\left(\sqrt{\frac{X}{x}}\right).
\end{align*}

Since $\chi_d(-1)=-1$, pairing the terms corresponding to $k$ and
$-k$ gives
\begin{align*}
&
\frac{\chi_d(k)}{k}
\left(1-e(-k/x)\right)
+
\frac{\chi_d(-k)}{-k}
\left(1-e(k/x)\right)
\\
&\qquad
=
\frac{2\chi_d(k)}{k}
\left(
1-\cos\frac{2\pi k}{x}
\right).
\end{align*}
Therefore
\begin{equation}
\sum_{n\le |d|/x}\chi_d(n)
=
\frac{\sqrt{|d|}}{\pi}F_d
+
O\left(\sqrt{\frac{X}{x}}\right),
\label{eq:polya-thm1-second}
\end{equation}
where
\begin{equation*}
F_d:=
\sum_{k\le z}
\frac{\chi_d(k)}{k}
\left(
1-\cos\frac{2\pi k}{x}
\right).
\end{equation*}

Thus it remains to obtain a large positive value of $F_d$.

\subsection{Construction of the resonator}

Fix sufficiently small constants
\[
0<\delta<\frac1{100},
\qquad
0<\rho<\frac1{100},
\]
and put
\begin{equation*}
Y:=X^{1/2-\delta}.
\end{equation*}
Set
\begin{equation*}
\lambda
:=
\sqrt{(1-\rho)\log Y\log_2Y}.
\end{equation*}

We define a non-negative multiplicative function $r$ supported on
square-free integers by
\begin{equation*}
r(p):=
\frac{\lambda}{\sqrt p\log p}
\end{equation*}
for
\[
\lambda^2
\le p\le
\exp\{(\log\lambda)^2\},
\]
and put
\[
r(p)=0
\]
for all other primes and
\[
r(p^\nu)=0
\qquad(\nu\ge2).
\]
Let
\[
\mathcal P
:=
\left\{
p:
\lambda^2\le p\le
\exp\{(\log\lambda)^2\}
\right\}.
\]

Define
\begin{equation*}
R(d):=\sum_{m\le Y}r(m)\chi_d(m).
\end{equation*}

As in the proof of the first assertion, the corresponding G\'al
quotient satisfies
\begin{align}
&
\frac{
\displaystyle
\sum_{m,n\le Y}
r(m)r(n)
\sqrt{\frac{(m,n)}{[m,n]}}
}{
\displaystyle
\sum_{n\le Y}r(n)^2
}
\nonumber\\
&\qquad\ge
\exp\left\{
\left(
2\sqrt{1-\rho}+o(1)
\right)
\sqrt{\frac{\log Y}{\log_2Y}}
\right\}.
\label{eq:gal-thm1-second}
\end{align}

We shall also need the localization of this G\'al sum. Put
\[
q(m,n):=\frac{[m,n]}{(m,n)}.
\]
We claim that
\begin{align}
&
\sum_{\substack{m,n\le Y\\q(m,n)>x/4}}
r(m)r(n)q(m,n)^{-1/2}
\nonumber\\
&\qquad=
o\left(
\sum_{m,n\le Y}
r(m)r(n)q(m,n)^{-1/2}
\right).
\label{eq:localization-thm1-second}
\end{align}

Indeed, put
\[
\alpha:=\frac1{(\log\lambda)^3}.
\]
By Rankin's trick, the ratio of the left-hand side of
\eqref{eq:localization-thm1-second} to the complete G\'al sum is at
most
\begin{equation}
\left(\frac4x\right)^\alpha
\prod_{p\in\mathcal P}
\frac{
1+r(p)^2+2r(p)p^{-1/2+\alpha}
}{
1+r(p)^2+2r(p)p^{-1/2}
}.
\label{eq:localization-product-second}
\end{equation}
Taking logarithms and using
\[
p^\alpha-1
=
(1+o(1))\alpha\log p,
\]
we obtain
\begin{align*}
\log(\text{right-hand side of
\eqref{eq:localization-product-second}})
&\le
-\alpha\log x
+
(2+o(1))\alpha
\sum_{p\in\mathcal P}
\frac{r(p)\log p}{\sqrt p}.
\end{align*}
Now
\begin{align*}
\sum_{p\in\mathcal P}
\frac{r(p)\log p}{\sqrt p}
&=
\lambda
\sum_{p\in\mathcal P}\frac1p
\nonumber\\
&\ll
\lambda\log_2\lambda.
\end{align*}
Since
\[
Y=X^{1/2-\delta},
\]
we have
\[
\lambda
=
\left(
\sqrt{(1-\rho)\left(\frac12-\delta\right)}
+o(1)
\right)
\sqrt{\log X\log_2X},
\]
and
\[
\log_2\lambda
=
\log_3X+O(1).
\]
Consequently,
\begin{align*}
2\lambda\log_2\lambda
&=
\left(
2\sqrt{(1-\rho)\left(\frac12-\delta\right)}
+o(1)
\right)
\sqrt{\log X\log_2X}\,\log_3X.
\end{align*}
Since
\[
2\sqrt{(1-\rho)\left(\frac12-\delta\right)}
<\sqrt2<4
\]
and, by assumption,
\[
\log x>
4\sqrt{\log X\log_2X}\,\log_3X,
\]
we conclude that
\[
\alpha
\left(
\log x-(2+o(1))\lambda\log_2\lambda
\right)
\longrightarrow+\infty.
\]
This proves \eqref{eq:localization-thm1-second}.

It follows from \eqref{eq:gal-thm1-second} and
\eqref{eq:localization-thm1-second} that
\begin{align}
&
\frac{
\displaystyle
\sum_{\substack{m,n\le Y\\q(m,n)\le x/4}}
r(m)r(n)q(m,n)^{-1/2}
}{
\displaystyle
\sum_{n\le Y}r(n)^2
}
\nonumber\\
&\qquad\ge
\exp\left\{
\left(
2\sqrt{1-\rho}+o(1)
\right)
\sqrt{\frac{\log Y}{\log_2Y}}
\right\}.
\label{eq:truncated-gal-thm1-second}
\end{align}

\subsection{The resonance moments}

Define
\begin{equation*}
M_1
:=
\sum_{d\in\mathcal F_X^-}R(d)^2
\end{equation*}
and
\begin{equation*}
M_2
:=
\sum_{d\in\mathcal F_X^-}
F_dR(d)^2.
\end{equation*}
Since $R(d)^2\ge0$, we have
\begin{equation}
\max_{d\in\mathcal F_X^-}F_d
\ge
\frac{M_2}{M_1},
\label{eq:resonance-second}
\end{equation}
provided that $M_2>0$.

We first estimate $M_1$. Expanding the square gives
\[
M_1
=
\sum_{m,n\le Y}
r(m)r(n)
\sum_{d\in\mathcal F_X^-}
\chi_d(mn).
\]
By the negative-discriminant version of
Lemma~\ref{lem2.2},
\begin{equation*}
M_1
\le
c_-X
\sum_{m\le Y}r(m)^2
+
E_1,
\end{equation*}
where
\[
E_1
\ll
X^{1/2+\eta+o(1)}
\left(
\sum_{m\le Y}r(m)
\right)^2.
\]
By Cauchy's inequality,
\[
\left(
\sum_{m\le Y}r(m)
\right)^2
\le
Y\sum_{m\le Y}r(m)^2.
\]
Thus
\[
E_1
\ll
X^{1/2+\eta+o(1)}
Y
\sum_{m\le Y}r(m)^2.
\]
Since
\[
Y=X^{1/2-\delta},
\]
we obtain
\[
E_1
\ll
X^{1-\delta+\eta+o(1)}
\sum_{m\le Y}r(m)^2.
\]
Taking
\[
0<\eta<\delta,
\]
we conclude that
\begin{equation}
M_1
\le
(c_-+o(1))
X\sum_{m\le Y}r(m)^2.
\label{eq:M1-final-second}
\end{equation}

We next estimate $M_2$. Expanding the sums gives
\begin{align*}
M_2
&=
\sum_{k\le z}
\frac{1-\cos(2\pi k/x)}{k}
\sum_{m,n\le Y}
r(m)r(n)
\sum_{d\in\mathcal F_X^-}
\chi_d(kmn).
\end{align*}
Applying Lemma~\ref{lem2.2}, we write
\[
M_2=M_{2,\square}+E_2,
\]
where
\begin{align*}
M_{2,\square}
&=
c_-X
\sum_{\substack{k\le z,\;m,n\le Y\\kmn=\square}}
r(m)r(n)
\frac{1-\cos(2\pi k/x)}{k}
\prod_{p\mid kmn}\frac{p}{p+1}.
\end{align*}

Since $m$ and $n$ are square-free,
\[
kmn=\square
\quad\Longleftrightarrow\quad
k=q(m,n)\ell^2,
\]
where
\[
q(m,n)=\frac{[m,n]}{(m,n)}.
\]
Put
\[
h(v):=\prod_{p\mid v}\frac{p}{p+1}.
\]
Then
\begin{align*}
M_{2,\square}
&=
c_-X
\sum_{m,n\le Y}r(m)r(n)
\sum_{\ell\le\sqrt{z/q(m,n)}}
\frac{
1-\cos\left(
2\pi q(m,n)\ell^2/x
\right)
}{
q(m,n)\ell^2
}
h(mn\ell).
\end{align*}

We restrict the outer sum to
\[
q(m,n)\le\frac{x}{4}
\]
and, for each such pair, restrict $\ell$ to
\begin{equation*}
\frac12\sqrt{\frac{x}{q(m,n)}}
\le
\ell
\le
\frac1{\sqrt2}
\sqrt{\frac{x}{q(m,n)}}.
\end{equation*}
For $\ell$ in this interval,
\[
\frac14
\le
\frac{q(m,n)\ell^2}{x}
\le
\frac12,
\]
and hence
\begin{equation*}
1-\cos\left(
2\pi\frac{q(m,n)\ell^2}{x}
\right)
\ge1.
\end{equation*}

Moreover,
\[
h(mn\ell)\ge h(mn)h(\ell).
\]
Therefore
\begin{align*}
M_{2,\square}
&\gg
X
\sum_{\substack{m,n\le Y\\q(m,n)\le x/4}}
\frac{r(m)r(n)h(mn)}{q(m,n)}
\nonumber\\
&\qquad\times
\sum_{\frac12\sqrt{x/q(m,n)}
\le\ell\le
\frac1{\sqrt2}\sqrt{x/q(m,n)}}
\frac{h(\ell)}{\ell^2}.
\end{align*}

Let
\[
L:=\sqrt{\frac{x}{q(m,n)}}.
\]
Since
\[
h(\ell)\ge\frac{\varphi(\ell)}{\ell}
\]
and
\[
\sum_{\ell\le u}\frac{\varphi(\ell)}{\ell}
=
\frac6{\pi^2}u+O(\log u),
\]
we have, uniformly as $L\to\infty$,
\[
\sum_{L/2\le\ell\le L/\sqrt2}h(\ell)\gg L.
\]
Since $\ell\asymp L$ throughout this interval,
\[
\sum_{L/2\le\ell\le L/\sqrt2}
\frac{h(\ell)}{\ell^2}
\gg
\frac1L.
\]
Consequently,
\begin{equation*}
\frac1{q(m,n)}
\sum_{L/2\le\ell\le L/\sqrt2}
\frac{h(\ell)}{\ell^2}
\gg
\frac1{\sqrt{xq(m,n)}}.
\end{equation*}
Thus
\begin{align}
M_{2,\square}
&\gg
\frac{X}{\sqrt x}
\sum_{\substack{m,n\le Y\\q(m,n)\le x/4}}
r(m)r(n)
h(mn)q(m,n)^{-1/2}.
\label{eq:M2-gal-second}
\end{align}

All prime factors of $m$ and $n$ belong to $\mathcal P$. Hence
\[
h(mn)
\ge
\prod_{p\in\mathcal P}\frac{p}{p+1}.
\]
Furthermore,
\[
\log
\prod_{p\in\mathcal P}\frac{p}{p+1}
=
-O\left(
\sum_{p\in\mathcal P}\frac1p
\right)
=
-O(\log_2\lambda).
\]
Therefore
\begin{equation}
h(mn)
\ge
\exp\left\{
-o\left(
\sqrt{\frac{\log Y}{\log_2Y}}
\right)
\right\}.
\label{eq:h-second}
\end{equation}
Combining \eqref{eq:M2-gal-second},
\eqref{eq:h-second}, and
\eqref{eq:truncated-gal-thm1-second}, we obtain
\begin{align}
M_{2,\square}
&\ge
\frac{X}{\sqrt x}
\left(
\sum_{m\le Y}r(m)^2
\right)
\nonumber\\
&\qquad\times
\exp\left\{
\left(
2\sqrt{1-\rho}+o(1)
\right)
\sqrt{\frac{\log Y}{\log_2Y}}
\right\}.
\label{eq:M2-main-final-second}
\end{align}

It remains to estimate $E_2$. By
Lemma~\ref{lem2.2}, together with the smooth support
of the resonator,
\begin{align*}
E_2
&\ll
X^{1/2+\eta+o(1)}
\left(
\sum_{m\le Y}r(m)
\right)^2
\sum_{k\le z}
\frac{1-\cos(2\pi k/x)}{k}.
\end{align*}
Since
\[
0\le1-\cos(2\pi k/x)\le2,
\]
we have
\[
\sum_{k\le z}
\frac{1-\cos(2\pi k/x)}{k}
\ll\log z
=X^{o(1)}.
\]
Consequently,
\begin{align*}
E_2
&\ll
X^{1/2+\eta+o(1)}
\left(
\sum_{m\le Y}r(m)
\right)^2
\nonumber\\
&\ll
X^{1/2+\eta+o(1)}
Y
\sum_{m\le Y}r(m)^2
\nonumber\\
&=
X^{1-\delta+\eta+o(1)}
\sum_{m\le Y}r(m)^2.
\end{align*}

On the other hand, the natural scale of
\eqref{eq:M2-main-final-second} is
\[
\frac{X}{\sqrt x}
\sum_{m\le Y}r(m)^2.
\]
Hence
\[
\frac{|E_2|}
{(X/\sqrt x)\sum_{m\le Y}r(m)^2}
\ll
X^{-\delta+\eta+o(1)}\sqrt x.
\]
Since
\[
\log x<(\log X)^{1/2+\varepsilon}=o(\log X),
\]
we have
\[
\sqrt x=X^{o(1)}.
\]
Taking $0<\eta<\delta$, we conclude that
\begin{equation*}
E_2
=
o\left(
\frac{X}{\sqrt x}
\sum_{m\le Y}r(m)^2
\right).
\end{equation*}

It follows that
\begin{align}
M_2
&\ge
\frac{X}{\sqrt x}
\left(
\sum_{m\le Y}r(m)^2
\right)
\nonumber\\
&\qquad\times
\exp\left\{
\left(
2\sqrt{1-\rho}+o(1)
\right)
\sqrt{\frac{\log Y}{\log_2Y}}
\right\}.
\label{eq:M2-final-second}
\end{align}

Combining \eqref{eq:resonance-second},
\eqref{eq:M1-final-second}, and
\eqref{eq:M2-final-second}, we obtain
\begin{align}
\max_{d\in\mathcal F_X^-}F_d
&\ge
\frac1{\sqrt x}
\exp\left\{
\left(
2\sqrt{1-\rho}+o(1)
\right)
\sqrt{\frac{\log Y}{\log_2Y}}
\right\}.
\label{eq:Fd-large-second}
\end{align}

Since
\[
Y=X^{1/2-\delta},
\]
we have
\[
\sqrt{\frac{\log Y}{\log_2Y}}
=
\left(
\sqrt{\frac12-\delta}+o(1)
\right)
\sqrt{\frac{\log X}{\log_2X}}.
\]
Thus
\begin{align*}
2\sqrt{1-\rho}
\sqrt{\frac{\log Y}{\log_2Y}}
&=
\left(
\sqrt{2(1-\rho)(1-2\delta)}
+o(1)
\right)
\sqrt{\frac{\log X}{\log_2X}}.
\end{align*}

Choose $d\in\mathcal F_X^-$ for which
\eqref{eq:Fd-large-second} holds. By
\eqref{eq:polya-thm1-second},
\begin{align*}
\sum_{n\le |d|/x}\chi_d(n)
&=
\frac{\sqrt{|d|}}{\pi}F_d
+
O\left(\sqrt{\frac{X}{x}}\right)
\nonumber\\
&\ge
\sqrt{\frac{|d|}{x}}
\exp\left\{
\left(
\sqrt{2(1-\rho)(1-2\delta)}
+o(1)
\right)
\sqrt{\frac{\log X}{\log_2X}}
\right\}.
\end{align*}
Here the fixed factor $1/\pi$ is absorbed into the $o(1)$ in the
exponent, and the error term is negligible since the exponential
factor tends to infinity.

Therefore, for every fixed sufficiently small $\rho,\delta>0$,
\[
\max_{\substack{X<|d|\le2X\\d\in\mathcal F}}
\frac1{\sqrt{|d|/x}}
\sum_{n\le |d|/x}\chi_d(n)
\ge
\exp\left\{
\left(
\sqrt{2(1-\rho)(1-2\delta)}
+o(1)
\right)
\sqrt{\frac{\log X}{\log_2X}}
\right\}.
\]
First letting $X\to\infty$, and then letting
$\rho\to0^+$ and $\delta\to0^+$, we finally obtain
\[
 {
\max_{\substack{X<|d|\le2X\\d\in\mathcal F}}
\frac1{\sqrt{|d|/x}}
\sum_{n\le |d|/x}\chi_d(n)
\ge
\exp\left\{
(\sqrt2+o(1))
\sqrt{\frac{\log X}{\log_2X}}
\right\}.
}
\]
This completes the proof of the second assertion of
Theorem~\ref{thm1}.
\section{Proof of the first part of Theorem \ref{thm2}}

Put
\[
 T:=\frac{\sqrt X}{x}
\]
and suppose that $T\to\infty$. We shall choose a parameter
$N=T^{1-o(1)}$ below.

Let $\mathcal M$ be the set supplied by
Lemma~\ref{GCD}, with $|\mathcal M|=N$, and define
\[
 R(d):=\sum_{m\in\mathcal M}\chi_d(m).
\]
We consider the two moments
\[
 M_1
 :=
 \sum_{\substack{X<|d|\le2X\\d\in\mathcal F}}
 R(d)^2
\]
and
\[
 M_2
 :=
 \sum_{\substack{X<|d|\le2X\\d\in\mathcal F}}
 \left(\sum_{k\le x}\chi_d(k)\right)R(d)^2.
\]
\subsection{Estimation of $M_1$ and $M_2$}

We first estimate $M_1$. Expanding the square, we have
\[
M_1
=
\sum_{m,n\in\mathcal M}
\sum_{\substack{X<|d|\le 2X\\ d\in\mathcal F}}
\chi_d(mn).
\]
Since all the elements of $\mathcal M$ are square-free, the condition
$mn=\square$ is equivalent to $m=n$. Applying the quadratic character
mean-value estimate under GRH, we obtain
\[
M_1
=
c_0X
\sum_{m\in\mathcal M}
\prod_{p\mid m}\frac{p}{p+1}
+
E_1,
\]
where $c_0>0$ is an absolute constant and
\[
E_1
\ll
X^{1/2+\eta}
\sum_{m,n\in\mathcal M}
f((mn)_0)g((mn)_1).
\]
Here, as usual, for an integer $r$ we write
\[
r=r_0r_1^2,
\qquad r_0 \ \text{square-free}.
\]

For the particular square-free G\'al set $\mathcal M$ used below,
all the prime divisors of its elements are bounded by
\[
y_{\mathcal M}\le (\log N)^{1+o(1)}.
\]
Therefore, uniformly for $m,n\in\mathcal M$,
\[
f((mn)_0)=X^{o(1)},
\qquad
g((mn)_1)=X^{o(1)}.
\]
Consequently,
\[
E_1
\ll
X^{1/2+\eta+o(1)}N^2.
\]
Since
\[
\prod_{p\mid m}\frac{p}{p+1}\le 1,
\]
we conclude that
\begin{equation}
M_1
\le
c_0XN
+
O\left(
X^{1/2+\eta+o(1)}N^2
\right).
\label{eq:M1-est}
\end{equation}

We next turn to $M_2$. By definition,
\[
M_2
=
\sum_{\substack{X<|d|\le2X\\d\in\mathcal F}}
\left(\sum_{k\le x}\chi_d(k)\right)R(d)^2.
\]
Expanding the sums, we obtain
\[
M_2
=
\sum_{k\le x}
\sum_{m,n\in\mathcal M}
\sum_{\substack{X<|d|\le2X\\d\in\mathcal F}}
\chi_d(kmn).
\]
Applying again the quadratic character mean-value estimate, we write
\[
M_2=M_{2,\square}+E_2,
\]
where
\begin{equation}
M_{2,\square}
=
c_0X
\sum_{\substack{k\le x,\;m,n\in\mathcal M\\kmn=\square}}
\prod_{p\mid kmn}\frac{p}{p+1},
\label{eq:M2-square}
\end{equation}
and
\[
E_2
\ll
X^{1/2+\eta}
\sum_{k\le x}
\sum_{m,n\in\mathcal M}
f((kmn)_0)g((kmn)_1).
\]

We first evaluate the square contribution $M_{2,\square}$. Fix
$m,n\in\mathcal M$ and write
\[
g=(m,n),\qquad m=ga,\qquad n=gb.
\]
Since $m$ and $n$ are square-free, the integers $g,a,b$ are
square-free and
\[
(a,b)=(a,g)=(b,g)=1.
\]
Thus
\[
mn=g^2ab.
\]
It follows that
\[
kmn=\square
\quad\Longleftrightarrow\quad
kab=\square.
\]
Since $ab$ is square-free, this is equivalent to
\[
k=ab\ell^2
\]
for some positive integer $\ell$. On the other hand,
\[
ab
=
\frac{mn}{(m,n)^2}
=
\frac{[m,n]}{(m,n)}.
\]
Therefore, defining
\[
q(m,n)
:=
\frac{[m,n]}{(m,n)},
\]
we have
\begin{equation}
kmn=\square
\quad\Longleftrightarrow\quad
k=q(m,n)\ell^2.
\label{eq:square-condition}
\end{equation}

Define
\[
h(r):=\prod_{p\mid r}\frac{p}{p+1}.
\]
Since
\[
mn=g^2q(m,n)
\]
and
\[
k=q(m,n)\ell^2,
\]
we have
\[
kmn
=
\bigl(gq(m,n)\ell\bigr)^2.
\]
Hence
\[
\operatorname{rad}(kmn)
=
\operatorname{rad}(mn\ell),
\]
and therefore
\[
\prod_{p\mid kmn}\frac{p}{p+1}
=
h(mn\ell).
\]
It follows from \eqref{eq:M2-square} that
\begin{equation}
M_{2,\square}
=
c_0X
\sum_{\substack{m,n\in\mathcal M\\q(m,n)\le x}}
\sum_{\ell\le\sqrt{x/q(m,n)}}
h(mn\ell).
\label{eq:M2-square-l}
\end{equation}

For arbitrary positive integers $u,v$, we have
\[
h(uv)\ge h(u)h(v).
\]
Indeed, if $p\mid(u,v)$, then
\[
\left(\frac{p}{p+1}\right)^2
\le
\frac{p}{p+1}.
\]
Hence
\[
h(mn\ell)\ge h(mn)h(\ell).
\]
Using this in \eqref{eq:M2-square-l}, we get
\begin{equation}
M_{2,\square}
\ge
c_0X
\sum_{\substack{m,n\in\mathcal M\\q(m,n)\le x}}
h(mn)
\sum_{\ell\le\sqrt{x/q(m,n)}}h(\ell).
\label{eq:M2-square-lower1}
\end{equation}

Furthermore,
\[
h(\ell)
=
\prod_{p\mid\ell}\frac{p}{p+1}
\ge
\prod_{p\mid\ell}\left(1-\frac1p\right)
=
\frac{\varphi(\ell)}{\ell}.
\]
Since
\[
\sum_{\ell\le L}\frac{\varphi(\ell)}{\ell}
=
\frac{6}{\pi^2}L+O(\log L),
\]
we have, uniformly for $L\to\infty$,
\begin{equation}
\sum_{\ell\le L}h(\ell)\gg L.
\label{eq:h-average}
\end{equation}

We now restrict the sum in \eqref{eq:M2-square-lower1} to pairs
$(m,n)$ satisfying
\[
q(m,n)
\le
\frac{x}{(\log X)^2}.
\]
Then
\[
\sqrt{\frac{x}{q(m,n)}}\ge\log X,
\]
and hence \eqref{eq:h-average} applies uniformly. We obtain
\begin{align}
M_{2,\square}
&\gg
X\sqrt{x}
\sum_{\substack{m,n\in\mathcal M\\
q(m,n)\le x/(\log X)^2}}
\frac{h(mn)}{\sqrt{q(m,n)}}
\nonumber\\
&=
X\sqrt{x}
\sum_{\substack{m,n\in\mathcal M\\
[m,n]/(m,n)\le x/(\log X)^2}}
h(mn)
\sqrt{\frac{(m,n)}{[m,n]}}.
\label{eq:M2-Gal-weighted}
\end{align}

For $m,n\in\mathcal M$, we have
\[
P^+(mn)\le y_{\mathcal M}.
\]
Therefore
\[
h(mn)
\ge
\prod_{p\le y_{\mathcal M}}\frac{p}{p+1}.
\]
Since
\[
\frac{p}{p+1}
=
\frac{1-p^{-1}}{1-p^{-2}},
\]
Mertens' theorem yields
\[
\prod_{p\le y}\frac{p}{p+1}
\asymp\frac1{\log y}.
\]
Hence
\[
h(mn)
\gg
\frac1{\log y_{\mathcal M}}.
\]
Put
\[
A_N
:=
\sqrt{\frac{\log N\log_3N}{\log_2N}}.
\]
Since
\[
y_{\mathcal M}\le(\log N)^{1+o(1)},
\]
we have
\[
\log_2y_{\mathcal M}=o(A_N),
\]
and therefore
\begin{equation}
h(mn)\ge \exp\{-o(A_N)\}
\label{eq:h-density}
\end{equation}
uniformly for $m,n\in\mathcal M$.

Using \eqref{eq:h-density} in \eqref{eq:M2-Gal-weighted}, we obtain
\begin{equation}
M_{2,\square}
\ge
X\sqrt{x}\exp\{-o(A_N)\}
\sum_{\substack{m,n\in\mathcal M\\
q(m,n)\le x/(\log X)^2}}
q(m,n)^{-1/2}.
\label{eq:M2-before-localization}
\end{equation}
 We next localize the G\'al sum. For $\alpha>0$, write
\[
S_\alpha(\mathcal M)
:=
\sum_{m,n\in\mathcal M}
\left(\frac{(m,n)}{[m,n]}\right)^\alpha
=
\sum_{m,n\in\mathcal M}q(m,n)^{-\alpha},
\]
where
\[
q(m,n):=\frac{[m,n]}{(m,n)}.
\]
Recall that the particular square-free G\'al set $\mathcal M$ supplied by
Lemma~2.3 may be chosen so that
\[
y_{\mathcal M}:=\max_{m\in\mathcal M}P^+(m)
\le (\log N)^{1+o(1)}.
\]

We claim that
\begin{equation}\label{eq:Gal-localization}
\sum_{\substack{m,n\in\mathcal M\\
q(m,n)>x/(\log X)^2}}
q(m,n)^{-1/2}
=
o\!\left(
\sum_{m,n\in\mathcal M}q(m,n)^{-1/2}
\right).
\end{equation}

Indeed, fix
\[
0<\eta<\min\left\{\frac{\varepsilon}{4},\frac14\right\}.
\]
For
\[
Z:=\frac{x}{(\log X)^2},
\]
Rankin's trick gives
\[
q(m,n)^{-1/2}\mathbf 1_{q(m,n)>Z}
\le
Z^{-\eta}q(m,n)^{-1/2+\eta}.
\]
Consequently,
\begin{equation}\label{eq:Gal-tail-rankin}
\sum_{\substack{m,n\in\mathcal M\\q(m,n)>Z}}
q(m,n)^{-1/2}
\le
Z^{-\eta}S_{1/2-\eta}(\mathcal M).
\end{equation}

We now estimate $S_{1/2-\eta}(\mathcal M)$. Since every element of
$\mathcal M$ is square-free and all its prime divisors are at most
$y_{\mathcal M}$, for each fixed $m\in\mathcal M$ we have
\[
\sum_{n\in\mathcal M}q(m,n)^{-1/2+\eta}
\le
\prod_{p\le y_{\mathcal M}}
\left(1+p^{-1/2+\eta}\right).
\]
Hence
\[
S_{1/2-\eta}(\mathcal M)
\le
N\prod_{p\le y_{\mathcal M}}
\left(1+p^{-1/2+\eta}\right).
\]
Using $\log(1+u)\le u$ and the prime number theorem, we obtain
\[
\log
\prod_{p\le y_{\mathcal M}}
\left(1+p^{-1/2+\eta}\right)
\ll
\sum_{p\le y_{\mathcal M}}p^{-1/2+\eta}
\ll
\frac{y_{\mathcal M}^{1/2+\eta}}{\log y_{\mathcal M}}
\ll
(\log N)^{1/2+\eta+o(1)}.
\]
Therefore
\begin{equation}\label{eq:lower-alpha-Gal}
S_{1/2-\eta}(\mathcal M)
\le
N\exp\left\{
(\log N)^{1/2+\eta+o(1)}
\right\}.
\end{equation}

Since $N\le T=\sqrt X/x\le \sqrt X$, we have
\[
\log N\le \frac12\log X.
\]
On the other hand, by the hypothesis of Theorem~1.2,
\[
\log x>(\log X)^{1/2+\varepsilon}.
\]
Our choice $\eta<\varepsilon/4$ therefore implies
\[
(\log N)^{1/2+\eta+o(1)}
=o(\log x).
\]
Moreover,
\[
\log Z
=
\log x-2\log_2X
=
(1+o(1))\log x.
\]
Combining this with \eqref{eq:Gal-tail-rankin} and
\eqref{eq:lower-alpha-Gal}, we obtain
\[
\sum_{\substack{m,n\in\mathcal M\\q(m,n)>Z}}
q(m,n)^{-1/2}
\le
N\exp\left\{
-\eta\log Z
+
(\log N)^{1/2+\eta+o(1)}
\right\}
=o(N).
\]
Since the diagonal terms $m=n$ give
\[
\sum_{m,n\in\mathcal M}q(m,n)^{-1/2}\ge N,
\]
it follows that
\[
\sum_{\substack{m,n\in\mathcal M\\q(m,n)>Z}}
q(m,n)^{-1/2}
=
o\!\left(
\sum_{m,n\in\mathcal M}q(m,n)^{-1/2}
\right),
\]
which proves \eqref{eq:Gal-localization}.
Hence
\begin{align}
\sum_{\substack{m,n\in\mathcal M\\
q(m,n)\le x/(\log X)^2}}
q(m,n)^{-1/2}
&=
(1+o(1))
\sum_{m,n\in\mathcal M}
q(m,n)^{-1/2}
\nonumber\\
&=
(1+o(1))
\sum_{m,n\in\mathcal M}
\sqrt{\frac{(m,n)}{[m,n]}}.
\label{eq:localization-main}
\end{align}

Write
\[
G(\mathcal M)
:=
\sum_{m,n\in\mathcal M}
\sqrt{\frac{(m,n)}{[m,n]}}.
\]
Then \eqref{eq:M2-before-localization} and
\eqref{eq:localization-main} give
\begin{equation*}
M_{2,\square}
\ge
X\sqrt{x}\exp\{-o(A_N)\}G(\mathcal M).
\end{equation*}

By the square-free G\'al sum construction, $\mathcal M$ may be chosen
with $|\mathcal M|=N$ such that
\[
G(\mathcal M)
\ge
N
\exp\left\{
(2+o(1))
\sqrt{\frac{\log N\log_3N}{\log_2N}}
\right\}.
\]
Thus
\begin{equation*}
M_{2,\square}
\ge
XN\sqrt{x}
\exp\left\{
(2+o(1))
A_N
\right\}.
\end{equation*}

We now estimate the error term $E_2$. Write
\[
kmn=s_0s_1^2,
\qquad s_0 \ \text{square-free}.
\]
By the submultiplicative properties of the functions $f$ and $g$ in
the quadratic character mean-value estimate,
\[
f(s_0)
\le
f(k_0)f(m)f(n)
\]
and
\[
g(s_1)
\le
g(k)g(m)g(n),
\]
where $k=k_0k_1^2$ with $k_0$ square-free. Since $k\le x<X^{1/2}$,
\[
f(k_0)=X^{o(1)},
\qquad
g(k)=X^{o(1)}.
\]
Moreover, by the smoothness of the elements of $\mathcal M$,
\[
f(m),f(n),g(m),g(n)=X^{o(1)}.
\]
Therefore
\[
f(s_0)g(s_1)=X^{o(1)}
\]
uniformly for $k\le x$ and $m,n\in\mathcal M$. It follows that
\begin{equation}
E_2
\ll
X^{1/2+\eta+o(1)}xN^2.
\label{eq:E2}
\end{equation}

We now choose $N$. Put
\[
T:=\frac{\sqrt X}{x}.
\]

Suppose first that
\[
x>X^{1/4}.
\]
We take
\[
N=\lfloor T\rfloor.
\]
Choose a fixed $\eta$ with
\[
0<\eta<\frac18.
\]
Then, by \eqref{eq:E2},
\begin{align*}
\frac{|E_2|}{XN\sqrt{x}}
&\ll
X^{-1/2+\eta+o(1)}
\sqrt{x}\,N
\nonumber\\
&\ll
\frac{X^{\eta+o(1)}}{\sqrt{x}}
\nonumber\\
&\le
X^{\eta-1/8+o(1)}
=o(1).
\end{align*}
Similarly, from \eqref{eq:M1-est},
\[
\frac{X^{1/2+\eta+o(1)}N^2}{XN}
\ll
X^{-1/4+\eta+o(1)}
=o(1).
\]
Hence
\[
M_1\ll XN
\]
and
\[
M_2
\ge
XN\sqrt{x}
\exp\{(2+o(1))A_N\}.
\]
Since $N\sim T$, we have
\[
A_N=(1+o(1))A_T,
\]
where
\[
A_T
=
\sqrt{\frac{\log T\log_3T}{\log_2T}}.
\]
Consequently,
\[
\frac1{\sqrt{x}}
\max_{\substack{X<|d|\le2X\\d\in\mathcal F}}
\sum_{k\le x}\chi_d(k)
\ge
\exp\{(2+o(1))A_T\}.
\]

Suppose next that
\[
x\le X^{1/4}.
\]
Fix an arbitrarily small $\delta>0$ and put
\[
N
=
\left\lfloor
\frac{X^{1/2-\delta}}{x}
\right\rfloor.
\]
Choose
\[
0<\eta<\frac{\delta}{2}.
\]
Then
\begin{align*}
\frac{|E_2|}{XN\sqrt{x}}
&\ll
X^{-1/2+\eta+o(1)}
\sqrt{x}\,N
\nonumber\\
&\ll
\frac{X^{\eta-\delta+o(1)}}{\sqrt{x}}
=o(1),
\end{align*}
and similarly
\[
\frac{X^{1/2+\eta+o(1)}N^2}{XN}
\ll
X^{\eta-\delta+o(1)}
=o(1).
\]
Hence again
\[
M_1\ll XN
\]
and
\[
M_2
\ge
XN\sqrt{x}
\exp\{(2+o(1))A_N\}.
\]

Since
\[
T=\frac{\sqrt X}{x}
\]
and $x\le X^{1/4}$, we have
\[
\log T\ge\frac14\log X.
\]
Moreover,
\[
\log N
=
\log T-\delta\log X+o(\log X)
\ge
(1-4\delta+o(1))\log T.
\]
It follows that
\[
A_N
\ge
\left(\sqrt{1-4\delta}+o(1)\right)A_T.
\]
Therefore
\[
\frac1{\sqrt{x}}
\max_{\substack{X<|d|\le2X\\d\in\mathcal F}}
\sum_{k\le x}\chi_d(k)
\ge
\exp\left\{
\left(2\sqrt{1-4\delta}+o(1)\right)A_T
\right\}.
\]
Since $\delta>0$ is arbitrary, letting $\delta\to0$ gives
\[
\frac1{\sqrt{x}}
\max_{\substack{X<|d|\le2X\\d\in\mathcal F}}
\sum_{k\le x}\chi_d(k)
\ge
\exp\left\{
(2+o(1))
A_T
\right\}.
\]

Recalling that
\[
T=\frac{\sqrt X}{x},
\]
we finally obtain
\[
 {
\max_{\substack{X<|d|\le2X\\d\in\mathcal F}}
\frac1{\sqrt{x}}
\sum_{k\le x}\chi_d(k)
\ge
\exp\left\{
(2+o(1))
\sqrt{
\frac{
\log(\sqrt X/x)\log_3(\sqrt X/x)
}{
\log_2(\sqrt X/x)
}
}
\right\}.
}
\]
This proves the first assertion of Theorem~\ref{thm2}.

\section{Proof of the second part of Theorem \ref{thm2}}

Put
\[
T:=\frac{\sqrt X}{x},
\]
and assume that $T\to\infty$. As in the proof of the first
assertion, let $\mathcal M$ be the particular square-free G\'al set
with
\[
|\mathcal M|=N
\]
and
\[
G(\mathcal M)
:=
\sum_{m,n\in\mathcal M}
\sqrt{\frac{(m,n)}{[m,n]}}
\ge
N
\exp\left\{
(2+o(1))
\sqrt{\frac{\log N\log_3N}{\log_2N}}
\right\}.
\]
We shall choose $N$ below. Define
\[
R(d):=\sum_{m\in\mathcal M}\chi_d(m).
\]

We restrict ourselves to negative fundamental discriminants. Put
\[
\mathcal F_X^-:=
\{d\in\mathcal F:-2X\le d<-X\}.
\]
Since the maximum over all fundamental discriminants is at least the
maximum over $\mathcal F_X^-$, it is sufficient to work with this
subfamily.

For $d\in\mathcal F_X^-$, the primitive quadratic character
$\chi_d$ is odd and
\[
\tau(\chi_d)=i\sqrt{|d|}.
\]
Applying Lemma~\ref{lem2.1} with
\[
\alpha=\frac1x
\]
and
\[
z:=\sqrt{2Xx}\log(2X),
\]
we obtain
\[
\sum_{n\le |d|/x}\chi_d(n)
=
\frac{\tau(\chi_d)}{2\pi i}
\sum_{1\le |k|\le z}
\frac{\chi_d(k)}{k}
\left(1-e(-k/x)\right)
+
O\left(
1+\frac{|d|\log|d|}{z}
\right).
\]
Since $\chi_d(-1)=-1$, pairing the terms $k$ and $-k$ gives
\[
\sum_{1\le |k|\le z}
\frac{\chi_d(k)}{k}
\left(1-e(-k/x)\right)
=
2\sum_{1\le k\le z}
\frac{\chi_d(k)}{k}
\left(1-\cos\frac{2\pi k}{x}\right).
\]
Consequently,
\begin{equation}
\sum_{n\le |d|/x}\chi_d(n)
=
\frac{\sqrt{|d|}}{\pi}F_d
+
O\left(\sqrt{\frac{X}{x}}\right),
\label{eq:polya-dual}
\end{equation}
where
\begin{equation*}
F_d
:=
\sum_{1\le k\le z}
\frac{\chi_d(k)}{k}
\left(1-\cos\frac{2\pi k}{x}\right).
\end{equation*}
Here we used $X<|d|\le2X$ and the choice of $z$.

We now apply the resonance method to $F_d$. Define
\[
\mathcal M_1
:=
\sum_{d\in\mathcal F_X^-}R(d)^2
\]
and
\[
\mathcal M_2
:=
\sum_{d\in\mathcal F_X^-}F_dR(d)^2.
\]
Since $R(d)^2\ge0$, we have
\begin{equation}
\max_{d\in\mathcal F_X^-}F_d
\ge
\frac{\mathcal M_2}{\mathcal M_1},
\label{eq:dual-resonance}
\end{equation}
provided that $\mathcal M_2>0$.

We use the negative-discriminant version of
Lemma~\ref{lem2.2}. By the same proof as that lemma,
if $r=r_0r_1^2$, with $r_0$ square-free, then
\[
\sum_{d\in\mathcal F_X^-}\chi_d(r)
=
c_-X\,
\mathbf 1_{r=\square}
\prod_{p\mid r}\frac{p}{p+1}
+
O_\eta\left(
X^{1/2+\eta}f(r_0)g(r_1)
\right),
\]
where $c_->0$ is an absolute constant.

Exactly as in the proof of the first assertion, this gives
\begin{equation}
\mathcal M_1
\le
c_-XN
+
O\left(
X^{1/2+\eta+o(1)}N^2
\right).
\label{eq:dual-M1}
\end{equation}

We next estimate $\mathcal M_2$. Expanding the sums, we have
\[
\mathcal M_2
=
\sum_{k\le z}
\frac{1-\cos(2\pi k/x)}{k}
\sum_{m,n\in\mathcal M}
\sum_{d\in\mathcal F_X^-}
\chi_d(kmn).
\]
Thus
\[
\mathcal M_2
=
\mathcal M_{2,\square}
+
\mathcal E_2,
\]
where
\begin{equation*}
\mathcal M_{2,\square}
=
c_-X
\sum_{\substack{k\le z,\;m,n\in\mathcal M\\kmn=\square}}
\frac{1-\cos(2\pi k/x)}{k}
\prod_{p\mid kmn}\frac{p}{p+1}.
\end{equation*}

We now evaluate the square contribution. For $m,n\in\mathcal M$ put
\[
q(m,n):=\frac{[m,n]}{(m,n)}.
\]
Since $m$ and $n$ are square-free, as in
\eqref{eq:square-condition} we have
\[
kmn=\square
\quad\Longleftrightarrow\quad
k=q(m,n)\ell^2
\]
for some positive integer $\ell$.

Define
\[
h(r):=\prod_{p\mid r}\frac{p}{p+1}.
\]
Then
\[
\prod_{p\mid kmn}\frac{p}{p+1}
=
h(mn\ell),
\]
and hence
\begin{align*}
\mathcal M_{2,\square}
&=
c_-X
\sum_{m,n\in\mathcal M}
\sum_{\substack{
\ell\le\sqrt{z/q(m,n)}
}}
\frac{
1-\cos\left(
2\pi q(m,n)\ell^2/x
\right)
}{
q(m,n)\ell^2
}
h(mn\ell).
\end{align*}

We restrict the outer sum to
\[
q(m,n)
\le
\frac{x}{(\log X)^2}
\]
and, for each such pair, restrict $\ell$ to the interval
\begin{equation}
\frac12\sqrt{\frac{x}{q(m,n)}}
\le
\ell
\le
\frac1{\sqrt2}
\sqrt{\frac{x}{q(m,n)}}.
\label{eq:ell-range}
\end{equation}
For $\ell$ in this interval,
\[
\frac14
\le
\frac{q(m,n)\ell^2}{x}
\le
\frac12,
\]
and therefore
\[
1-\cos\left(
2\pi\frac{q(m,n)\ell^2}{x}
\right)
\ge 1.
\]
Moreover, since
\[
q(m,n)\le\frac{x}{(\log X)^2},
\]
the interval in \eqref{eq:ell-range} has length
$\gg\log X$.

Since
\[
h(mn\ell)\ge h(mn)h(\ell),
\]
we obtain
\begin{align*}
\mathcal M_{2,\square}
&\gg
X
\sum_{\substack{m,n\in\mathcal M\\
q(m,n)\le x/(\log X)^2}}
\frac{h(mn)}{q(m,n)}
\sum_{\frac12\sqrt{x/q(m,n)}
\le\ell\le
\frac1{\sqrt2}\sqrt{x/q(m,n)}}
\frac{h(\ell)}{\ell^2}.
\end{align*}

Let
\[
L:=\sqrt{\frac{x}{q(m,n)}}.
\]
For $L/2\le\ell\le L/\sqrt2$, we have
\[
\frac1{\ell^2}\gg\frac1{L^2}.
\]
Furthermore,
\[
h(\ell)\ge\frac{\varphi(\ell)}{\ell},
\]
and
\[
\sum_{\ell\le y}\frac{\varphi(\ell)}{\ell}
=
\frac6{\pi^2}y+O(\log y).
\]
Consequently, uniformly for $L\ge\log X$,
\[
\sum_{L/2\le\ell\le L/\sqrt2}h(\ell)
\gg L.
\]
It follows that
\[
\sum_{L/2\le\ell\le L/\sqrt2}
\frac{h(\ell)}{\ell^2}
\gg
\frac1L.
\]
Since
\[
L=\sqrt{\frac{x}{q(m,n)}},
\]
we obtain
\[
\frac1{q(m,n)}
\sum_{L/2\le\ell\le L/\sqrt2}
\frac{h(\ell)}{\ell^2}
\gg
\frac1{\sqrt{xq(m,n)}}.
\]
Therefore
\begin{equation*}
\mathcal M_{2,\square}
\gg
\frac{X}{\sqrt x}
\sum_{\substack{m,n\in\mathcal M\\
q(m,n)\le x/(\log X)^2}}
h(mn)q(m,n)^{-1/2}.
\end{equation*}

As in the proof of the first assertion,
\[
h(mn)
\ge
\exp\{-o(A_N)\},
\qquad
A_N:=
\sqrt{\frac{\log N\log_3N}{\log_2N}},
\]
uniformly for $m,n\in\mathcal M$. Moreover, the same localization
estimate gives
\[
\sum_{\substack{m,n\in\mathcal M\\
q(m,n)>x/(\log X)^2}}
q(m,n)^{-1/2}
=
o\left(
\sum_{m,n\in\mathcal M}
q(m,n)^{-1/2}
\right).
\]
Hence
\begin{align*}
\mathcal M_{2,\square}
&\ge
\frac{X}{\sqrt x}
\exp\{-o(A_N)\}
G(\mathcal M)
\nonumber\\
&\ge
\frac{XN}{\sqrt x}
\exp\{(2+o(1))A_N\}.
\end{align*}

We now estimate the non-square contribution. Writing
\[
kmn=s_0s_1^2,
\qquad s_0\ \text{square-free},
\]
and using Lemma~\ref{lem2.2} together with the
submultiplicative properties of $f$ and $g$, we obtain
\[
f(s_0)g(s_1)=X^{o(1)}
\]
uniformly for $k\le z$ and $m,n\in\mathcal M$. Since
\[
0\le
\frac{1-\cos(2\pi k/x)}{k}
\le
\frac2k,
\]
we have
\[
\sum_{k\le z}
\frac{1-\cos(2\pi k/x)}{k}
\ll \log z
=X^{o(1)}.
\]
Therefore
\begin{equation}
\mathcal E_2
\ll
X^{1/2+\eta+o(1)}N^2.
\label{eq:dual-error}
\end{equation}

We now choose $N$ exactly as in the proof of the first assertion.

Suppose first that
\[
x>X^{1/4}.
\]
Take
\[
N=\left\lfloor\frac{\sqrt X}{x}\right\rfloor.
\]
Choose a fixed $\eta$ with
\[
0<\eta<\frac18.
\]
Then, by \eqref{eq:dual-error},
\begin{align*}
\frac{|\mathcal E_2|}
{XN/\sqrt x}
&\ll
X^{-1/2+\eta+o(1)}N\sqrt x
\nonumber\\
&\ll
\frac{X^{\eta+o(1)}}{\sqrt x}
\nonumber\\
&\le
X^{\eta-1/8+o(1)}
=o(1).
\end{align*}
Similarly, the error term in \eqref{eq:dual-M1} is $o(XN)$.
Consequently,
\[
\mathcal M_1\ll XN
\]
and
\[
\mathcal M_2
\ge
\frac{XN}{\sqrt x}
\exp\{(2+o(1))A_N\}.
\]
It follows from \eqref{eq:dual-resonance} that
\begin{equation*}
\max_{d\in\mathcal F_X^-}F_d
\ge
\frac1{\sqrt x}
\exp\{(2+o(1))A_N\}.
\end{equation*}
Since
\[
N\sim T=\frac{\sqrt X}{x},
\]
we have
\[
A_N=(1+o(1))A_T,
\qquad
A_T:=
\sqrt{\frac{\log T\log_3T}{\log_2T}}.
\]

Suppose next that
\[
x\le X^{1/4}.
\]
Fix an arbitrarily small $\delta>0$ and take
\[
N=
\left\lfloor
\frac{X^{1/2-\delta}}{x}
\right\rfloor.
\]
Choose
\[
0<\eta<\frac{\delta}{2}.
\]
Then
\[
\frac{|\mathcal E_2|}
{XN/\sqrt x}
\ll
X^{-1/2+\eta+o(1)}N\sqrt x
\ll
\frac{X^{\eta-\delta+o(1)}}{\sqrt x}
=o(1),
\]
and again the error in $\mathcal M_1$ is $o(XN)$. Hence
\[
\max_{d\in\mathcal F_X^-}F_d
\ge
\frac1{\sqrt x}
\exp\{(2+o(1))A_N\}.
\]
Since
\[
T=\frac{\sqrt X}{x}
\]
and $x\le X^{1/4}$,
\[
\log T\ge\frac14\log X,
\]
while
\[
\log N
=
\log T-\delta\log X+o(\log X)
\ge
(1-4\delta+o(1))\log T.
\]
Thus
\[
A_N
\ge
\left(\sqrt{1-4\delta}+o(1)\right)A_T.
\]
Since the estimate holds for every fixed sufficiently small
$\delta>0$, first letting $X\to\infty$ and then
$\delta\to0^+$ gives
\begin{equation}
\max_{d\in\mathcal F_X^-}F_d
\ge
\frac1{\sqrt x}
\exp\{(2+o(1))A_T\}.
\label{eq:Fd-final}
\end{equation}

Choose $d\in\mathcal F_X^-$ for which
\eqref{eq:Fd-final} holds. Returning to the P\'olya expansion
\eqref{eq:polya-dual}, we obtain
\begin{align*}
\sum_{n\le |d|/x}\chi_d(n)
&=
\frac{\sqrt{|d|}}{\pi}F_d
+
O\left(\sqrt{\frac{X}{x}}\right)
\nonumber\\
&\ge
\sqrt{\frac{|d|}{x}}
\exp\{(2+o(1))A_T\},
\end{align*}
since $|d|\asymp X$ and $A_T\to\infty$. The fixed factor
$1/\pi$ is absorbed into the $o(1)$ in the exponent.

Therefore
\[
 {
\max_{\substack{X<|d|\le2X\\d\in\mathcal F}}
\frac1{\sqrt{|d|/x}}
\sum_{n\le |d|/x}\chi_d(n)
\ge
\exp\left\{
(2+o(1))
\sqrt{
\frac{
\log(\sqrt X/x)\log_3(\sqrt X/x)
}{
\log_2(\sqrt X/x)
}
}
\right\}.
}
\]
This proves the second assertion of Theorem~\ref{thm2}.
	\section*{Acknowledgements}
	Z. Dong is supported by  the National
	Natural Science Foundation of China (Grant No. 	1240011770). W. Wang is supported by the National
	Natural Science Foundation of China (Grant No. 1250012812). H. Zhang is supported by the Fundamental Research Funds for the Central Universities (Grant No. 531118010622), the National
	Natural Science Foundation of China (Grant No. 1240011979) and the Hunan Provincial Natural Science Foundation of China (Grant No. 2024JJ6120).

	\normalem


\begin{thebibliography}{99}
		
			




\bibitem{BT} de la Bret\`{e}che, R.; Tenenbaum, G. {\emph Sommes de G\'{a}l et applications}, {\it Proc. Lond. Math. Soc.}, {\bf 119} (2019), 104--134.
	
	
	

\bibitem{DM} Darbar, P.; Maiti, G. {\emph Large values of quadratic Dirichlet $L$-functions}, {\it Math. Ann.}, {\bf 392} (2025), 4573--4605.

\bibitem{DWZ} Dong, Z.; Wang, W.;  Zhang, H. {\emph Structure of large quadratic character sums}, preprint, arXiv:2306.06355
\bibitem{DSWZ} Dong, Z.; Song, Y.; Wang, R.; Zhao, S. {\emph Note  on large quadratic character sums}, preprint, arXiv:2510.09005
\bibitem{DWWZ}Dong, Z.; Wang, R.; Wang, W.; Zhang, H. {\emph Large quadratic character sums revisited}, preprint, arXiv:2512.24147
\bibitem{DZ} Dong, Z.;  Zhang, Y. {\emph Large quadratic character sums}, preprint, arXiv:2509.07651
\bibitem{GS} Granville, A.; Soundararajan, K. {\emph The Distribution of values of $L(1,\chi_d)$}, {\it Geom. Funct. Anal.}, {\bf 13} (2003), 992--1028.
 


\bibitem{GS01} Granville, A.; Soundararajan K. {\emph Large character sums}, {\it J. Amer. Math. Soc.}, {\bf 14} (2001), 365--397.

\bibitem{Hil} Hilberdink, T. {\emph An arithmetical mapping and applications to results for the Riemann zeta
function}, {\it Acta Arith.}, {\bf 139} (2009), 341--367.
 
\bibitem{Hou} Hough, B. {\emph The resonance method for large character sums}, {\it  Mathematika},  {\bf 59},(2013), 87-118



\bibitem{Lam} Lamzouri, Y. {\emph The distribution of large quadratic character sums and applications}, {\it Algebra $\&$ Number Theory}, {\bf 18} (2024), 2091--2131.

\bibitem{MVbook} Montgomery H. L.;  Vaughan  R.C.
\emph{Multiplicative Number Theory I: Classical Theory},
Cambridge Studies in Advanced Mathematics, Vol. 97. Cambridge University Press, 2006.
\bibitem{Mun} Munsch, M.  {\emph The maximum size of short character sums}, {\it Ramanujan J.} {\bf 53} (2020), 27–-38.

\bibitem{Paley}
 Paley, R. E. A. C. {\emph A theorem on characters}, {\it J. Lond. Math. Soc.}, {\bf 1} (1932),
28--32. 

\bibitem{Sound}
 Soundararajan, K. {\emph Extreme values of zeta and $L$-functions}, {\it Math. Ann.}, {\bf 342} (2008),
67--86. 





		
	\end{thebibliography}
\end{document}